\newcommand{\msc}[2][2000]{%
	\let\@oldtitle\@title%
	\gdef\@title{\@oldtitle\footnotetext{#1 \emph{Mathematics subject
				classification.} #2}}%
}
\theoremstyle{plain}
\newtheorem{theorem}{Theorem} [section]
\newtheorem{lemma}[theorem]{Lemma}
\newtheorem{proposition}[theorem]{Proposition}
\theoremstyle{remark}
\newtheorem{remark}[theorem]{Remark}
\def\({\left(}
\def\){\right)}
\newcommand{\wh}{\widehat}
\newcommand{\la}{\langle}
\newcommand{\ra}{\rangle}
\def\le{\leqslant}
\def\ge{\geqslant}
\def\<{\left\langle}
\def\>{\right\rangle}
\def\Re{\operatorname{Re}}
\def\Im{\operatorname{Im}}
\newcommand{\ov}{\overline}
\newcommand{\p}{\partial}
\newcommand{\be}{\begin{equation}}
	\newcommand{\ee}{\end{equation}}
\numberwithin{equation}{section}
\begin{document}
	\title[Filtered LIE-TROTTER Splitting for GB]{Filtered Lie-Trotter splitting for the ``good" Boussinesq equation: low regularity error estimates}
	
	\author[L. Ji]{Lun Ji}
	\author[H. Li]{Hang Li}
	\author[A. Ostermann]{Alexander Ostermann}
	\author[C. Su]{Chunmei Su}
	
	\address{Department of Mathematics\\Universit\"at Innsbruck\\Technikerstr.~13\\6020 Innsbruck\\Austria (L. Ji)}
	\email{lun.ji@uibk.ac.at}
	
	\address{Yau Mathematical Sciences Center\\Tsinghua University\\
		100084 Beijing\\China (H. Li)}
	\email{lihang20@mails.tsinghua.edu.cn}

	\address{Department of Mathematics\\Universit\"at Innsbruck\\Technikerstr.~13\\6020 Innsbruck\\Austria (A. Ostermann)}
	\email{alexander.ostermann@uibk.ac.at}
	
	\address{Yau Mathematical Sciences Center\\Tsinghua University\\
		100084 Beijing\\ China (C. Su)}
	\email{sucm@tsinghua.edu.cn}

	\begin{abstract}
		We investigate a filtered Lie-Trotter splitting scheme for the ``good" Boussinesq equation and derive an error estimate for initial data with very low regularity. Through the use of  discrete Bourgain spaces, our analysis extends to initial data in $H^{s}$ for $0<s\leq 2$, overcoming the constraint of $s>1/2$ imposed by the bilinear estimate in smooth Sobolev spaces. We establish convergence rates of order $\tau^{s/2}$ in $L^2$ for such levels of regularity. Our analytical findings are supported by numerical experiments.
	\end{abstract}
	\thanks{H. L. and C. S. are supported by National Key R\&D Program of China (Grant No. 2023YFA1008902) and the NSFC 12201342. }

	\subjclass[2020]{Primary: 35G31. Secondary:  65M12, 65M15, 65M70.}
	\keywords{``Good" Boussinesq equation; low regularity; Lie-Trotter splitting; discrete Bourgain spaces}

	\maketitle
	
	\allowdisplaybreaks
	
	\section{Introduction}\label{intro}
	
	We consider the following periodic boundary value problem of the ``good" Boussinesq (GB) equation:
	\be\label{GB}
	\left\{	\begin{aligned}
		&z_{tt}+z_{xxxx}-z_{xx}-(z^2)_{xx}=0, \quad x \in \mathbb{T}=[-\pi,\pi], \quad t>0,\\
		&z(0, x)=\phi_0(x), \quad z_{t}(0, x)=\psi_0(x),
	\end{aligned}
	\right.\ee
	where $\phi_0$ and $\psi_0$ are given initial data. The GB equation, originally formulated by Boussinesq in 1872 \cite{Boussinesq1872}, is a fundamental model for describing the propagation of dispersive shallow water waves. This equation effectively captures the intricate interplay between various physical factors, including dispersion, nonlinearity, and wave-breaking effects in shallow water systems. Indeed, it has found wide-ranging applications in diverse fields such as oceanography \cite{tatlock2018assessment}, coastal engineering \cite{lakhan2003advances}, and physics \cite{kirby1996nonlinear,johnson1997modern}.
	
	In recent decades, extensive analytical and numerical research has been carried out on the GB equation. Studies, including \cite{farah2010periodic,farah2009local,kishimoto2013sharp,oh2013improved,wang2013well},
	have investigated the local well-posedness of \eqref{GB} on the torus $\mathbb{T}$. Specifically, for any initial data $(\phi_0, \psi_0)\in H^{s}(\mathbb{T})\times H^{s-2}(\mathbb{T})$ with $s\geq-1/2$, there exists a positive number $T_0=T_0(\lVert \phi_0\rVert_{H^s}, \lVert \psi_0\rVert_{H^{s-2}})>0$, and a unique mild solution $z\in C([0,T_0], H^s)$ to \eqref{GB}. For studies on the existence of traveling wave solutions, we refer to \cite{el2003numerical,de1991pseudospectral,lambert1987soliton,manoranjan1984numerical,manoranjan1988soliton}, and the references therein. In terms of numerical aspects, various conventional methods have been employed to solve the GB equation, including finite difference methods \cite{bratsos2007second}, finite element methods \cite{manoranjan1984numerical}, spectral methods \cite{cheng2015fourier, de1991pseudospectral,su2020deuflhard}, exponential integrators \cite{su2020deuflhard} and splitting methods \cite{zhang2017second}. These methods typically require very smooth solutions to achieve their classical convergence rates. Unfortunately, in practical applications, initial data often exhibit low regularity due to measurement errors or inherent noise. Unlike parabolic equations, where irregularities tend to smooth out over time, in dispersive equations such as the GB equation, rough initial data can propagate. This emphasizes the demand for the development of numerical methods that maintain their accuracy even when the data exhibit less regularity.
	
	Recently, Ostermann \& Su \cite{ostermann2019two} have proposed two low regularity exponential-type integrators (LREIs) for the GB equation, which can achieve first- and second-order accuracy in $H^r$ $(r>1/2)$ when the solutions are in $H^{r+1}$ and $H^{r+3}$, respectively. This requirement is significantly lower compared to methods such as the operator splitting approach \cite{zhang2017second} and the spectral method \cite{cheng2015fourier}, which need at least four additional derivatives to achieve second-order convergence in time. Building on this work, Li \& Su \cite{li2022lowregularity} developed new LREIs that outperform the ones proposed by Ostermann \& Su in terms of derivative requirements. In particular, their second-order LREI has quadratic convergence with only two additional derivatives required, thus relaxing the previously established regularity assumptions that require three additional derivatives. It should be noted, however, that the convergence results mentioned above require the error to be measured in $H^s$ with $s > \frac{1}{2}$, as the error estimate is based on the bilinear estimate
	\[
	\|fg\|_{H^s} \lesssim_s \|f\|_{H^s} \|g\|_{H^s},
	\]
	which is \emph{only} valid for $s > \frac{1}{2}$. This suggests that classical LREIs achieve the corresponding convergence only if the solution is bounded at least in $H^{s}$ with $s>1/2$. To address the challenge of handling low regularity initial data, in this work, we introduce an approach for the GB equation that builds on the theory of discrete Bourgain spaces, originally established by Ostermann, Rousset \& Schratz \cite{Ostermann2022JEMS}. We develop and analyze a filtered Lie-Trotter splitting (FLTS) scheme, which yields an error of $\mathcal{O}(\tau^{s/2})$ for a solution in $H^{s}$, where $0<s\le 2$ and $\tau$ is the time step size. The introduction of this FLTS is motivated by the fact that filtered schemes, as demonstrated in \cite{Ostermann2022JEMS,ji2023Lowregularity}, have been shown to maintain excellent convergence properties even under very low regularity initial data.
	
	The FLTS scheme is constructed and analyzed using the following strategies.
	
	\noindent (i) \emph{Construction of the scheme}. First, we introduce a complex variable
	\be\label{rel}
	u=z+1/2-i\langle \partial_x^2\rangle^{-1}z_t,\ee
	where $\langle \p_x^2\rangle:=(1+\p_x^4)^{1/2}$. Then the GB equation can be transformed into an equivalent first-order equation
	\be\label{orieq}
	\left\{	\begin{aligned}
		&\p_tu=i\la \p_x^2 \ra u-\frac{i}{4}\la \p_x^2 \ra^{-1}\left[2(u+\ov{u})+\p_x^2(u+\ov{u})^2\right],    \\
		&u(0, x)=u_0(x)=\phi_0(x)+\frac12-i\la \p_x^2 \ra^{-1}\psi_0(x).
	\end{aligned}
	\right.\ee
	Its projected form reads as follows
	\be\label{truneq}
	\left\{	\begin{aligned}
		&\p_tu_{\tau}=i\la \p_x^2 \ra u_{\tau}-\frac{i}{4}\la \p_x^2 \ra^{-1}\Pi_{\tau}\left[2(u_{\tau}+\ov{u}_{\tau})+\p_x^2(\Pi_{\tau}u_{\tau}+\Pi_{\tau}\ov{u}_{\tau})^2\right],    \\
		&u_{\tau}(0, x)=\Pi_{\tau}\phi_0(x)+1/2-i\la \p_x^2 \ra^{-1}\Pi_{\tau}\psi_0(x),
	\end{aligned}
	\right.\ee
	where the projection operator $\Pi_{\tau}$ for $\tau>0$ is defined by the Fourier multiplier
	\[
	\Pi_{\tau}=\chi\left(\frac{-i\partial_x}{\tau^{-1/2}}\right),
	\]
	with $\chi$ denoting the characteristic function of $[-1, 1]$. Then \eqref{truneq} is split into two subproblems that are easier to solve. By solving each subproblem, we finally obtain the Lie-Trotter splitting scheme for \eqref{truneq} (see \eqref{numflow} below), which is in fact the FLTS scheme for \eqref{orieq}.
	
	\noindent(ii) \emph{Well-posedness and error of the intermediate model \eqref{truneq}.} To establish the numerical error of the FLTS scheme for \eqref{orieq}, we first analyze the difference between \eqref{orieq} and its projected counterpart \eqref{truneq}. Unlike the globally well-posed Schr\"odinger equation \cite{Ostermann2022splitting}, the GB equation is only locally well-posed for general initial data \cite{oh2013improved}. Thus, it is necessary to investigate and compare the existence time of the two models. With the help of Bourgain spaces, we will rigorously prove that the solution of the projected equation \eqref{truneq} exists for a sufficiently long time if $\tau$ is small enough. This is crucial for establishing the difference between the two models \eqref{orieq} and \eqref{truneq}:
	\[\|u_{\tau}(t_n)-u(t_n)\|_{L^2} \le C_T\tau^{s/2},\quad 0<t_n\le T\ ({\rm cf. \ Prop.} \ \ref{wellposednessforpj}),\quad \mathrm{for}\quad u_0\in H^s.\]
	
	\noindent (iii) \emph{Error of the FLTS scheme for \eqref{orieq}.} By combining this estimate with the time discretization error of the FLTS scheme \eqref{numflow} for the projected equation \eqref{truneq}:
	\[
	\|u_{\tau}(t_n)-u^n\|_{L^2} \le C_T\tau^{s/2}, \quad 0<t_n\le T\ ({\rm cf.\ Sect.} \ \ref{global}), \quad \mathrm{for}\quad u_0\in H^s,
	\]
	we are able to establish the desired global error of $u^n$:
	\[
	\|u(t_n)-u^n\|_{L^2} \le C_T\tau^{s/2}, \quad 0<t_n\le T, \quad \mathrm{for}\quad u_0\in H^s.
	\]
	
	In view of the relation \eqref{rel}, we obtain the global error of the FLTS scheme.
	\begin{theorem}\label{th1}
		For $\phi_0\in H^{s}$ and $\psi_0\in H^{s-2}$, let $T_s\in(0, \infty]$ be the maximum existence time of the solution of \eqref{GB}. Furthermore, for any $T<T_{s}$, $s\in(0, 2]$, we denote the unique solution of \eqref{GB} with initial data $(\phi_0, \psi_0)$ as $(z, z_t) \in C([0, T], H^{s})\times C([0, T], H^{s-2})$, and the numerical solution \eqref{znscheme} as $(z^n, z_t^n)$. Then, there exists $\tau_0>0$ such that for $\tau\le\tau_0$, the error is bounded as follows:
\[ \|z(t_n)-z^n\|_{L^2}+\|z_t(t_n)-z_t^n\|_{H^{-2}}\lesssim_{T_{}} \tau^{s/2},\quad 0<t_n=n\tau\le T.
		\]
	\end{theorem}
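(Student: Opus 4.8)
The plan is to reduce the statement to the error estimate for the complex reformulation \eqref{orieq} and then invoke the two auxiliary bounds announced in steps (ii) and (iii) of the introduction. Recall from \eqref{rel} that $u = z + 1/2 - i\la\p_x^2\ra^{-1} z_t$, so that $z = \RE u - 1/2$ and $z_t = -\la\p_x^2\ra \IM u$; the same algebraic relation defines the numerical quantities $(z^n, z_t^n)$ from $u^n$ via \eqref{znscheme}. Since $\la\p_x^2\ra = (1+\p_x^4)^{1/2}$ is an isomorphism from $H^0$ onto $H^{-2}$ (more precisely $\|f\|_{H^{-2}} \eqsim \|\la\p_x^2\ra^{-1} f\|_{L^2}$), we immediately get
\[
\|z(t_n) - z^n\|_{L^2} + \|z_t(t_n) - z_t^n\|_{H^{-2}} \lesssim \|u(t_n) - u^n\|_{L^2},
\]
so it suffices to establish $\|u(t_n) - u^n\|_{L^2} \lesssim_{T} \tau^{s/2}$ for $0 < t_n \le T$.

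First I would fix $T < T_s$ and note that on $[0,T]$ the solution $u$ of \eqref{orieq} is well-defined with $u \in C([0,T], H^s)$, $u_0 = \phi_0 + 1/2 - i\la\p_x^2\ra^{-1}\psi_0 \in H^s$ (here one uses $\psi_0 \in H^{s-2}$ together with the mapping property of $\la\p_x^2\ra^{-1}$); this is exactly the transcription of the well-posedness of \eqref{GB} quoted in the introduction. Next I invoke Proposition~\ref{wellposednessforpj}: for $\tau$ small enough the projected equation \eqref{truneq} has a solution $u_\tau$ on all of $[0,T]$ and
\[
\|u_\tau(t_n) - u(t_n)\|_{L^2} \le C_T \tau^{s/2}, \qquad 0 < t_n \le T.
\]
Then I invoke the time-discretization estimate for the FLTS scheme \eqref{numflow} applied to \eqref{truneq}, established in Section~\ref{global}:
\[
\|u_\tau(t_n) - u^n\|_{L^2} \le C_T \tau^{s/2}, \qquad 0 < t_n \le T.
\]
The triangle inequality combines these two bounds into $\|u(t_n) - u^n\|_{L^2} \le 2 C_T \tau^{s/2}$, and choosing $\tau_0$ to be the smaller of the two thresholds coming from the two propositions completes the argument.

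The genuinely hard work is of course hidden inside the two cited estimates, not in this final assembly; the delicate point in the reduction itself is merely to make sure the constants and the step-size threshold $\tau_0$ are uniform on $[0,T]$ — this is guaranteed because both Proposition~\ref{wellposednessforpj} and the Section~\ref{global} estimate are stated with $T$-dependent (but $n$-independent) constants $C_T$ and with a single $\tau_0 = \tau_0(T)$ valid for all $t_n \le T$. I would also remark that the restriction $s \le 2$ enters only through those results (it is the range in which the discrete Bourgain-space bilinear estimate is exploited), so nothing new about it needs to be said here. Finally, translating back through \eqref{rel} is linear and $\tau$-independent, so no regularity is lost in the last step, and the stated bound follows.
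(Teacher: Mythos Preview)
Your proposal is correct and follows essentially the same approach as the paper: reduce the $(z,z_t)$ error to $\|u(t_n)-u^n\|_{L^2}$ via \eqref{rel}--\eqref{znscheme}, then split by the triangle inequality into the projection error (Proposition~\ref{wellposednessforpj}) and the time-discretization error $\|u_\tau(t_n)-u^n\|_{L^2}$ handled in Section~\ref{global}. The paper's proof opens with exactly this reduction and then devotes the remainder of Section~\ref{global} to the discrete-Bourgain estimate of $e_n=u_\tau(t_n)-u^n$, which you correctly identify as the substantive part hidden behind the citation.
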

		\begin{remark}
The focus of this paper is exclusively on time discretization. A full discretization employing a filtered Lie splitting scheme in time, in conjunction with a pseudospectral method in space, has been previously analyzed for the nonlinear Schr\"odinger equation \cite{ji2024Lowregularityfull}. By applying similar techniques within the framework of Bourgain spaces, we can establish the full discrete scheme along with its corresponding convergence for the GB equation. Details are omitted here for the sake of brevity.
	\end{remark}
	
	The rest of the paper is structured as follows. In Section \ref{LT}, we derive the Lie-Trotter splitting scheme for the Schr\"odinger-type equation \eqref{orieq}, which is equivalent to the original GB equation \eqref{GB}. Section \ref{LWP} provides proofs for the local existence and uniqueness of solutions for both \eqref{orieq} and its projected version \eqref{truneq}, along with the estimates for the differences between their solutions. Section \ref{DBS} introduces the properties of discrete Bourgain spaces, which are then utilized in Section \ref{LRLT} to provide global error estimates for the FLTS scheme. Finally, in Section \ref{num}, we present the results of numerical experiments.
	\vspace{0.27cm}
	
	\noindent\textbf{Notations.} In this paper, the notation $X \lesssim Y$ indicates that there exists a constant $C>0$, which may vary from line to line but remains independent of the time step $\tau$, such that $|X|\le C Y$. In addition, $X\sim Y$ means $Y\lesssim X\lesssim Y$. We use the notation $\lesssim_{\gamma}$ to emphasize that $C$ depends on $\gamma$ and we denote $\langle \,\cdot\,\rangle =\sqrt{1+|\cdot|^2}$. For a sequence of functions $(u_n)_{n\in \mathbb{Z}}$ with each $u_n$ in some Banach space $B$, we employ the norms
	\[
	\|u_n\|_{l_{\tau}^pB}=\Big(\tau\sum\limits_n\|u_n\|_{B}^p\Big)^{\frac{1}{p}},
	\quad \|u_n\|_{l_{\tau}^{\infty}B}=\sup\limits_{n\in \mathbb{Z}}\|u_n\|_{B}.
	\]
	

	\section{A first-order FLTS scheme for GB equation}\label{LT}
	In this section, we  derive the filtered Lie-Trotter splitting scheme based on an equivalent formulation of the GB equation.
	\subsection{Reformulation of the GB equation}\label{preGB}
	By using the approach outlined in \cite{ostermann2019two} and defining $\check{z}=z+\frac{1}{2}$, we can rephrase the  GB equation as follows
	\[
	\left\{	\begin{aligned}
		&\check{z}_{tt}+\check{z}_{xxxx}-(\check{z}^2)_{xx}=0, \quad x \in \mathbb{T}, \quad t>0,\\
		&\check{z}(0, x)=\phi_0(x)+\frac{1}{2}, \quad \check{z}_{t}(0, x)=\psi_0(x).
	\end{aligned}
	\right. \]
	This can be further reformulated as a system
	\be\label{GBcoup}
	\left\{	\begin{aligned}
		&\p_tu=i\la \p_x^2 \ra u-\frac{i}{4}\la \p_x^2 \ra^{-1}\left[2(u+\ov{v})+\p_x^2(u+\ov{v})^2\right],    \\
		&\p_tv=i\la \p_x^2 \ra v-\frac{i}{4}\la \p_x^2 \ra^{-1}\left[2(\ov{u}+v)+\p_x^2(\ov{u}+v)^2\right],
	\end{aligned}
	\right.\ee
	where
	\[
	u=\check{z}-i\la \p_x^2 \ra^{-1}\check{z}_t, \quad v=\ov{\check{z}}-i\la \p_x^2 \ra^{-1}\ov{\check{z}_t}.
	\]
	Since $z\in \mathbb{R}$, we conclude $u=v$, and
	\[
	\check{z}=\frac{1}{2}(u+\ov{u}), \quad \check{z}_t=\frac{i}{2} \la \p_x^2 \ra^{} (u-\ov{u}).
	\]
	Thus the system \eqref{GBcoup} involving $u, v\in \mathbb{C}$ is reduced to the Schr{\" o}dinger-type equation \eqref{orieq},
	and the solution of \eqref{GB} can be recovered via
	\be\label{recoverz}
	z=\frac{1}{2}(u+\ov{u})-\frac{1}{2}, \quad z_t=\frac{i}{2} \la \p_x^2 \ra^{} (u-\ov{u}).
	\ee
	For the remainder, we will refer to both the original equation \eqref{GB} and the aforementioned equivalent first-order equation \eqref{orieq} as the GB equation. In the following, we will establish the Lie-Trotter splitting scheme based on the projected GB equation \eqref{truneq}, which is actually our FLTS scheme.
	\subsection{A Lie-Trotter splitting method for the projected GB equation}\label{LTGB}
	It is clear that \eqref{truneq} is exactly the projected counterpart of \eqref{orieq}. Now we construct a classical  Lie-Trotter splitting discretization of \eqref{truneq} based on the following splitting
	\[
	\partial_{t}u_{\tau} =X_1(u_{\tau})+X_2(u_{\tau}),
	\]
	where
	\[
	X_1 (u_{\tau})=i\langle \partial_x^2 \rangle u_{\tau}, \quad X_2(u_{\tau})=-\frac{i}{4}\langle \partial_x^2 \rangle^{-1}\Pi_{\tau}\left[2(u_{\tau} + \overline{u}_{\tau})+\p_x^2(\Pi_{\tau}u_{\tau}+ \Pi_{\tau}\ov{u}_{\tau})^2\right].
	\]
	The corresponding subproblems can be written as
	\[
	\left\{
	\begin{aligned}
		&\partial_{t}\nu (t,x)=X_1(\nu(t,x)),\\
		&\nu(0,x)=\nu_0(x), \quad x\in \mathbb{T}, \quad t>0,
	\end{aligned}\right.\quad\quad\quad
	\left\{
	\begin{aligned}
		&\partial_{t}\omega(t,x) =X_2(\omega(t,x)),\\
		&\omega(0,x)=\omega_0(x),  \quad x\in \mathbb{T}, \quad t>0.
	\end{aligned}\right.
	\]
	Notably, the solution to the linear subproblem can be written as
	\[
	\nu(t, \cdot)=\varPhi_{X_1}^t(\nu_0)={\rm e}^{it\langle \partial_x^2 \rangle}\nu_0,
	\]
	which allows for precise integration in phase space. In the case of the nonlinear subproblem, by writing $\omega=\omega_1+i\omega_2$ with $\omega_1, \omega_2\in \mathbb{R}$, we observe that
	\begin{align}\label{w121}
		\p_t\omega_1= 0,\quad
		\p_t\omega_2=-\langle \partial_x^2 \rangle^{-1}\Pi_{\tau}\left[\omega_1+\p_x^2(\Pi_{\tau}\omega_1)^2\right].
	\end{align}
	Therefore, we have
	\[
	\omega_1(t)=\omega_1(0)=\operatorname{Re} \omega_0,\quad
	\p_t\omega_2=-\langle \partial_x^2 \rangle^{-1}\Pi_{\tau}\left[\omega_1(0)+\partial_x^2(\Pi_{\tau}\omega_1(0))^2\right],
	\]
	which yields
	\[
	\omega_2(t)=\Im\omega_0-t\langle \partial_x^2 \rangle^{-1}\Pi_{\tau}\left[\omega_1(0)+\partial_x^2(\Pi_{\tau}\omega_1(0))^2\right],\]
	and
	\begin{align*} \omega(t)&=\omega_1(t)+i\omega_2(t)\\
		&=\Re\omega_0+i\left[\Im\omega_0-t\la \p_x^2\ra^{-1}\Pi_{\tau}\omega_1(0)-t\la\p_x^2\ra^{-1}\Pi_{\tau}\partial_x^2(\Pi_{\tau}\omega_1(0))^2 \right]	\\
		&=\omega_0-\frac{it}{2}\la \p_x^2\ra^{-1}\Pi_{\tau}\left[(\omega_0+\ov{\omega}_0)+\frac{1}{2}\partial_x^2(\Pi_{\tau}\omega_0+\Pi_{\tau}\ov{\omega}_0)^2\right]\\
		&\eqqcolon\varPhi_{X_2}^t(\omega_0).
	\end{align*}
	Finally, the filtered first-order Lie-Trotter splitting scheme for the GB equation takes the form
	\begin{subequations}\label{numflow}
		\begin{align}
			u^{n+1}_{}=\varPhi_{}^{\tau}(u^n_{})=\varPhi^{\tau}_{X_1}(\varPhi^{\tau}_{X_2}(u^n_{})), \quad u^0_{}=\Pi_{\tau}u_0,
		\end{align}
		where
		\begin{align}
			\varPhi^{\tau}(v)={\rm e}^{i\tau\la \p_x^2\ra}\Pi_{\tau}\Big[v-\frac{i\tau}{2}\la \p_x^2\ra^{-1}(v+\ov{v})-\frac{i\tau}{4}\la \p_x^2\ra^{-1}\partial_x^2(\Pi_{\tau}v+\Pi_{\tau}\ov{v})^2\Big],
		\end{align}
	\end{subequations}
	and the numerical approximations for $z(t_n)$ and $z_t(t_n)$ are given by
	\begin{align}\label{znscheme}
		z^n_{}=\frac{1}{2}(u^n_{}+\overline{u}^n_{})-\frac{1}{2},\quad
		z^n_{t}=\frac{i}{2}\langle\partial_x^2\rangle(u^n_{}-\overline{u}^n_{}).
	\end{align}
	Correspondingly, high-order splitting methods can be constructed.

	\medskip
	\section{Local Well-posedness of \eqref{orieq} and its projected equation}\label{LWP}
	In this section, we focus on studying the local existence and uniqueness for the GB equation \eqref{orieq}. This step is crucial for comparing the existence times of solutions for the original equation \eqref{orieq} and its projected version \eqref{truneq}. We begin by recalling the definition of Bourgain spaces \cite{Chapouto2018Fourier}.
	To do this, let us consider the space-time Fourier transform and its inverse:
	\begin{align*}
		\mathcal{F}(u)(\sigma, k)&=\wh{u}(\sigma, k)=\frac{1}{2\pi}\int_{\mathbb{R}}\int_{\mathbb{T}}u(t,x){\rm e}^{-i\sigma t-ikx}dxdt,\quad \sigma\in\mathbb{R},\quad k\in\mathbb{Z};\\
		u(t,x)&=\frac{1}{2\pi}\sum\limits_{k\in \mathbb{Z}}\int_{\mathbb{R}}\wh{u}(\sigma, k){\rm e}^{i\sigma t+ikx}d\sigma,\quad t\in\mathbb{R},\quad x\in \mathbb{T}.
	\end{align*}
	For a given continuous function $g: \mathbb{R}\rightarrow \mathbb{R}$, the Bourgain space $X^{s, b}_{\sigma=g(k)}(\mathbb{R}\times\mathbb{T})$ with $s$, $b\in\mathbb{R}$ is defined as the set of all tempered distributions that are bounded in the following norm:
	\begin{align}\label{norm}
		\|u\|_{X^{s,b}_{\sigma=g(k)}}:=\left\| \la k\ra^s\la \sigma -g(k)\ra^b \wh{u}(\sigma, k)\right\|_{l^2_kL_{\sigma}^2}.
	\end{align}
	It is evident that for any given continuous function $g$, $X^{0,0}_{\sigma=g(k)}=L^2(\mathbb{R}\times\mathbb{T})$ and
	\[
	\|u\|_{X^{0, 0}_{\sigma=g(k)}}=\left\|\wh{u}(\sigma, k)\right\|_{l^2_kL^2_{\sigma}}=\left\|u\right\|_{L^2(\mathbb{R}\times\mathbb{T})}.\]
	For a closed interval $I\subset \mathbb{R}$, we consider a localized Bourgain space denoted by $X_{\sigma=g(k)}^{s,b}(I)$, which consists of functions such that the norm $\|u\|_{X^{s,b}(I)}$ is finite, where
	\begin{align}\label{locali}
		\|u\|_{X^{s,b}_{\sigma=g(k)}(I)}:=\inf\{\|v\|_{X^{s,b}_{\sigma=g(k)}}, \ v|_{I}=u\}.
	\end{align}
	For brevity, we denote  $X^{s, b}_{\sigma=g(k)}([0, L])$ with $L>0$ by $X^{s, b}_{\sigma=g(k)}(L)$.
	
	In the case of the GB equation, following its formulation, we define $g(k)=\sqrt{k^4+1}$ and abbreviate the corresponding Bourgain space $X^{s, b}_{\sigma=\sqrt{k^4+1}}(\mathbb{R}\times\mathbb{T})$ as $X^{s,b}$ for simplicity. Considering $k^2\le \sqrt{k^4+1}\le k^2+1$, we conclude that there exists $c>0$ such that  for all $\sigma\in\mathbb{R}$ and all $k\in \mathbb{Z}$, the inequality
	\[
	\frac{1}{c}\le \frac{1+|\sigma-k^2|}{1+|\sigma-\sqrt{k^4+1}|}\le c
	\]
	holds.
	This implies
	\be\label{equiva}
	\begin{split}
		\|u\|_{X^{s,b}}&=\left\| \la k\ra^s\la \sigma -\sqrt{k^4+1}\ra^b \wh{u}(\sigma, k)\right\|_{l^2_kL_{\sigma}^2}\\
		&\sim \left\| \la k\ra^s\la \sigma -k^2\ra^b \wh{u}(\sigma, k)\right\|_{l^2_kL_{\sigma}^2}=\|u\|_{X_{\sigma=k^2}^{s,b}},
	\end{split}
	\ee
	and $X^{s,b}=X^{s, b}_{\sigma=\sqrt{k^4+1}}=X^{s, b}_{\sigma=k^2}$.
	Applying the fact (cf. \cite[Proposition 2.2]{Chapouto2018Fourier})
	\be\label{bourelation}
	\|u\|_{X^{s, b}_{\sigma=g(k)}}=\|\overline{u}\|_{X^{s, b}_{\sigma=-g(-k)}}\ee
	and the properties of the Bourgain space ${X^{s, b}_{\sigma=-k^2}}$
	established in \cite{Chapouto2018Fourier,TTao}, we easily derive the following lemma.
	
	\begin{lemma}\label{contibour}
		For $s\in \mathbb{R}$, $u_0\in H^s$, $f\in X^{s,b}$ and $\eta\in C_c^{\infty}(\mathbb{R})$, we have that
		\begin{align}
			&\left\|f\right\|_{L^{\infty}(\mathbb{R}, H^s)}\lesssim_{b} \|f\|_{X^{s,b}}, \ \ \  b>\tfrac{1}{2}, \label{lemma11}\\
			&\left\|\eta(t)f\right\|_{X^{s,b}}\lesssim_{\eta, b}\left\|f\right\|_{X^{s,b}}, \ \ \ b \in \mathbb{R},\label{lemma12}\\
			&\left\|\eta(t){\rm e}^{it\la \p_x^2\ra}u_0\right\|_{X^{s,b}}\lesssim_{\eta, b} \|u_0\|_{H^s}, \ \ \ b \in \mathbb{R}, \label{lemma13}\\
			&\left\|\eta(t)\int_0^t{\rm e}^{i(t-s)\la \p_x^2\ra}F(s)ds\right\|_{X^{s,b}}\lesssim_{\eta, b}\left\|F\right\|_{X^{s, b-1}}, \quad  b>\tfrac{1}{2}, \label{lemma14}\\
			&\left\|\eta\left(t/T\right)f\right\|_{X^{s,b^{\prime}}}\lesssim_{\eta, b, b^{\prime}}T^{b-b^{\prime}}\|f\|_{X^{s,b}},  \quad -\tfrac{1}{2}< b^{\prime}\le b<\tfrac{1}{2}, \quad 0<T\le 1, \label{lemma15}\\
			&\left\|f-\Pi_{\tau}f\right\|_{X^{0,b}}\lesssim \tau^{s/2}\|f\|_{X^{s,b}}, \quad b\in \mathbb{R}, \label{lemma16}\\
			&\left\|\Pi_{\tau}f\right\|_{X^{s,b}(L)}\lesssim \tau^{-s/2}\|f\|_{X^{0,b}(L)}, \quad s\ge 0, \quad b\in \mathbb{R}, \quad L>0. \label{lemma17}
		\end{align}
	\end{lemma}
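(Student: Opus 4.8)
The plan is to treat the seven estimates in three groups. Estimates \eqref{lemma11}--\eqref{lemma15} are the standard linear toolbox for a Bourgain space adapted to a Schr\"odinger-type dispersion, and I would reduce each of them to the corresponding known statement for the space $X^{s,b}_{\sigma=-k^2}$ recorded in \cite{Chapouto2018Fourier,TTao}: by the symbol equivalence \eqref{equiva} we may replace the weight $\la\sigma-\sqrt{k^4+1}\ra$ by $\la\sigma-k^2\ra$ without changing the norm up to constants, and by the conjugation identity \eqref{bourelation} any estimate for $X^{s,b}_{\sigma=k^2}$ transfers to one for $X^{s,b}_{\sigma=-k^2}$. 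The remaining two estimates \eqref{lemma16}--\eqref{lemma17} are elementary consequences of the frequency localization built into $\Pi_\tau$.

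The unifying device for \eqref{lemma11}--\eqref{lemma13} is the transfer identity obtained by conjugating with the linear flow. Setting $v(t,\cdot)={\rm e}^{-it\la\p_x^2\ra}u(t,\cdot)$, the definition \eqref{norm} gives directly
\[
\|u\|_{X^{s,b}}=\|v\|_{H_t^bH_x^s},
\]
since peeling off the phase ${\rm e}^{-itg(k)}$ turns the weight $\la\sigma-g(k)\ra^b$ into the plain temporal weight $\la\sigma\ra^b$. Because ${\rm e}^{it\la\p_x^2\ra}$ is unitary on $H^s$, estimate \eqref{lemma11} is then the one-dimensional Sobolev embedding $H_t^b\hookrightarrow L_t^\infty$ valid for $b>\tfrac12$; estimate \eqref{lemma12} is the boundedness of multiplication by the smooth compactly supported function $\eta$ on $H_t^b$ for every $b\in\mathbb{R}$, which commutes with the spatial conjugation since $\eta$ depends on $t$ only; and \eqref{lemma13} is the special case in which $v(t,\cdot)=u_0$ is constant in $t$, so that $\|\eta(t)u_0\|_{H_t^bH_x^s}=\|\eta\|_{H_t^b}\|u_0\|_{H^s}$.

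For the Duhamel estimate \eqref{lemma14} and the time-localization estimate \eqref{lemma15} I would invoke the classical Bourgain-space estimates for $X^{s,b}_{\sigma=\pm k^2}$ from \cite{TTao,Chapouto2018Fourier} and transfer them through \eqref{equiva}--\eqref{bourelation}. The only genuine analytic content lies here: \eqref{lemma14} is proved by writing the Duhamel term on the Fourier side, splitting into the region where $\la\sigma-g(k)\ra$ dominates and its complement, and using $b>\tfrac12$ to sum the resulting temporal integrals; \eqref{lemma15} exploits the concentration of the Fourier transform of $\eta(t/T)$ at scale $T^{-1}$ together with the two-sided constraint $-\tfrac12<b'\le b<\tfrac12$, which is exactly what permits the gain of the factor $T^{b-b'}$. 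I expect \eqref{lemma15}, with its endpoint restriction, to be the most delicate point, but since it is a standard result it is cited rather than reproved.

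Finally, \eqref{lemma16} and \eqref{lemma17} follow from the support of the multiplier $\chi(\,\cdot\,/\tau^{-1/2})$ alone, the temporal weight $\la\sigma-g(k)\ra^b$ being untouched by $\Pi_\tau$. On the support of $1-\chi(\,\cdot\,/\tau^{-1/2})$ one has $|k|\ge\tau^{-1/2}$, hence $1\le(\tau^{1/2}|k|)^s=\tau^{s/2}|k|^s\le\tau^{s/2}\la k\ra^s$ for $s\ge0$, and multiplying by the common $\sigma$-weight and taking the $l_k^2L_\sigma^2$ norm yields \eqref{lemma16}. Conversely, on the support of $\chi(\,\cdot\,/\tau^{-1/2})$ one has $|k|\le\tau^{-1/2}$, so $\la k\ra^s\lesssim\tau^{-s/2}$ for $s\ge0$, giving the global bound $\|\Pi_\tau f\|_{X^{s,b}}\lesssim\tau^{-s/2}\|f\|_{X^{0,b}}$; taking the infimum over all extensions $v$ with $v|_{[0,L]}=f$ as in \eqref{locali} then produces the localized statement \eqref{lemma17}.
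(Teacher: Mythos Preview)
Your proposal is correct and follows essentially the same route as the paper. The paper's proof reduces \eqref{lemma11}--\eqref{lemma15} to the corresponding results in \cite{Chapouto2018Fourier} via the symbol equivalence \eqref{equiva} and the conjugation identity \eqref{bourelation}, exactly as you do; you merely unpack the mechanism behind \eqref{lemma11}--\eqref{lemma13} by spelling out the identification $\|u\|_{X^{s,b}}=\|{\rm e}^{-it\la\p_x^2\ra}u\|_{H_t^bH_x^s}$, which is how those cited propositions are proved anyway. For \eqref{lemma16}--\eqref{lemma17} both you and the paper argue directly from the frequency support of $\Pi_\tau$, and your remark that one passes to the localized norm in \eqref{lemma17} by taking the infimum over extensions is the right way to justify that step.
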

	\begin{proof} By utilizing \eqref{equiva}, \eqref{bourelation} and Propositions 2.3-2.6, Lemma 2.7 in \cite{Chapouto2018Fourier}, we immediately derive \eqref{lemma11}-\eqref{lemma15}.
		Furthermore, \eqref{lemma16}-\eqref{lemma17} follow from the definition of $\Pi_\tau$ straightforwardly and the proof is completed.
	\end{proof}
	
	Now, we recall the Kato-Ponce inequality, which was derived in \cite{Kato1988CommutatorEA,bourgain2014endpoint,li2019kato,LiWu22} for $\mathbb{R}$ or $\mathbb{T}$.
	\begin{lemma} (Kato-Ponce inequality)
		For $s>0$, $1<p< \infty$, $1<p_1, p_3< \infty$, $1<p_2, p_4\le \infty$ satisfying $\frac{1}{p}=\frac{1}{p_1}+\frac{1}{p_2}$ and $\frac{1}{p}=\frac{1}{p_3}+\frac{1}{p_4}$, we have the following inequality
		\be\label{kp1}
		\big\|\la\partial_x\ra^s (fg)\big\|_{L^p}\lesssim \big\|\la\partial_x\ra^sf\big\|_{L^{p_1}}\big\|g\big\|_{L^{p_2}}+
		\big\|\la\partial_x\ra^s g\big\|_{L^{p_3}}\big\|f\big\|_{L^{p_4}},
		\ee
		where $\la\partial_x\ra^s f=\sum\limits_{k\in\mathbb{Z}}\langle k\rangle^s \widehat{f}_k e^{ikx}$ for $f=\sum\limits_{k\in\mathbb{Z}}\widehat{f}_k e^{ikx}$.
	\end{lemma}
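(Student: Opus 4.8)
The plan is to establish the fractional Leibniz rule \eqref{kp1} through a Littlewood--Paley (Bony paraproduct) decomposition of the product $fg$ on $\T$; equivalently, one may transfer the classical estimate on $\R$ to the torus by a de Leeuw-type periodization argument, which is the route taken in the references cited above. I will sketch the direct paraproduct approach. Introduce dyadic Littlewood--Paley projections $P_N$ ($N$ a dyadic integer $\ge 1$) together with a fattened version $\widetilde P_N$ supported on $|k|\sim N$, and the low-frequency cutoff $P_{\le N}$. The two analytic inputs are, for $1<p<\infty$, the square-function characterization $\|h\|_{L^p}\sim \big\|(\sum_N|P_N h|^2)^{1/2}\big\|_{L^p}$ on $\T$ and the Fefferman--Stein vector-valued maximal inequality for the Hardy--Littlewood maximal operator $M$, both of which are available precisely in the stated ranges of exponents.

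First I would split the product into high--low, low--high, and high--high interactions,
\[
fg=\sum_N (P_N f)(P_{\le N/8}g)+\sum_N (P_{\le N/8}f)(P_N g)+\sum_N (P_N f)(\widetilde P_N g)=:\pi_1+\pi_2+\pi_3.
\]
In $\pi_1$ the output frequency is $\sim N$, so $\la\p_x\ra^s$ acts like $N^s$ on the high-frequency factor, while the low-frequency factor obeys the pointwise bound $|P_{\le N/8}g|\lesssim Mg$; combining the square function with Hölder ($\tfrac1p=\tfrac1{p_1}+\tfrac1{p_2}$) yields
\[
\|\la\p_x\ra^s\pi_1\|_{L^p}\lesssim \Big\|\big(\textstyle\sum_N N^{2s}|P_N f|^2\big)^{1/2}\,Mg\Big\|_{L^p}\lesssim \|\la\p_x\ra^s f\|_{L^{p_1}}\|g\|_{L^{p_2}},
\]
where $p_2>1$ activates the maximal inequality and the endpoint $p_2=\infty$ is handled by replacing $Mg$ with $\|g\|_{L^\infty}$. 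The term $\pi_2$ is symmetric and contributes $\|\la\p_x\ra^s g\|_{L^{p_3}}\|f\|_{L^{p_4}}$.

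The main obstacle is the high--high term $\pi_3$, where both factors carry frequency $\sim N$ but the output frequency $L$ may be as small as $1$, so one cannot read off which factor absorbs the derivative. Here I would use that $s>0$ makes a dyadic series converge: since $P_L(P_N f\,\widetilde P_N g)$ vanishes unless $N\gtrsim L$, one writes $\la\p_x\ra^s\pi_3=\sum_L L^s P_L G_L$ with $G_L=\sum_{N\gtrsim L}(P_N f)(\widetilde P_N g)$, and the pieces $L^s P_L G_L$ are frequency-localized at scale $L$, so the square function applies. Factoring $L^s=(L/N)^s N^s$ and summing the geometric kernel $(L/N)^s\mathbf 1_{L\lesssim N}$ by Schur's test (which is exactly where $s>0$ enters) reduces the square sum to $\big(\sum_N N^{2s}|P_N f|^2\big)^{1/2}$ times $\sup_N|\widetilde P_N g|\lesssim Mg$, and Hölder closes the estimate as for $\pi_1$. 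Summing the three contributions gives \eqref{kp1}; the constraints $1<p_1,p_3<\infty$ and $1<p_2,p_4\le\infty$ are exactly what the square-function and (vector-valued) maximal bounds require, and the single zero-frequency mode on $\T$ is estimated trivially and does not affect the conclusion.
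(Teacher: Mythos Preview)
The paper does not actually prove this lemma: it is merely \emph{recalled} from the literature, with the sentence ``Now, we recall the Kato--Ponce inequality, which was derived in \cite{Kato1988CommutatorEA,bourgain2014endpoint,li2019kato,LiWu22} for $\mathbb{R}$ or $\mathbb{T}$'' and no further argument. So there is no in-paper proof to compare against; the authors treat \eqref{kp1} as a black box and only use it (with $p=p_1=p_3=2$, $p_2=p_4=4$) inside Lemma~\ref{bilinear} and \eqref{lemma25}.

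Your paraproduct sketch is the standard route taken in those references and is essentially correct. The decomposition into $\pi_1,\pi_2,\pi_3$, the use of the Littlewood--Paley square function ($1<p<\infty$) together with the Fefferman--Stein maximal inequality for the low-frequency factors, and the geometric summation in the diagonal piece via $s>0$ are exactly the ingredients in, e.g., Li's proof \cite{li2019kato}. Two small remarks: in $\pi_3$ you bound the second factor by $\sup_N|\widetilde P_N g|\lesssim Mg$ and then apply H\"older with $(p_1,p_2)$, which produces only the first term on the right of \eqref{kp1}; this is fine since the right-hand side is a sum, but it is worth saying explicitly that the diagonal term can be absorbed into either summand. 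Also, the endpoint $p_2=\infty$ (or $p_4=\infty$) bypasses the maximal inequality via the trivial $\|P_{\le N}g\|_{L^\infty}\lesssim\|g\|_{L^\infty}$, as you note; the square-function step on the other factor still needs $p_1,p_3\in(1,\infty)$, which matches the stated hypotheses.
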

	
	Using the refined bilinear $L^4$ estimate, we are able to prove the following lemma, which will play an essential role in establishing the local well-posedness for the GB equation.
	\begin{lemma}\label{bilinear}
		For $s\ge 0$ and $u, v\in X^{s, \frac{3}{8}}$, we have
		\begin{align*}
			\|uv\|_{X^{s, 0}}\lesssim \|u\|_{X^{s,\frac{3}{8}}}\|v\|_{X^{s, \frac{3}{8}}}.
		\end{align*}
		The inequality also holds when replacing $uv$ with $u\ov{v}$ or $\ov{u}\ov{v}$.
	\end{lemma}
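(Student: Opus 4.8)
The plan is to collapse the $X^{s,0}$ norm to an $L^2_{t,x}$ norm, distribute the spatial derivative via the Kato--Ponce inequality, and reduce matters to the endpoint $L^4$-Strichartz estimate for the propagator $e^{it\la\partial_x^2\ra}$ --- which is precisely the case $s=0$ of the assertion, i.e.\ the ``refined bilinear $L^4$ estimate''.

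Since $b=0$, the modulation weight $\la\sigma-k^2\ra^0$ equals $1$, so by Plancherel $\|w\|_{X^{s,0}}=\big\|\la k\ra^s\wh w(\sigma,k)\big\|_{l^2_kL^2_\sigma}=\|\la\partial_x\ra^sw\|_{L^2_{t,x}(\mathbb{R}\times\mathbb{T})}$, where $\la\partial_x\ra^s$ denotes the Fourier multiplier with symbol $\la k\ra^s$. Hence it suffices to bound $\|\la\partial_x\ra^s(uv)\|_{L^2_{t,x}}$. For $s>0$ I would apply the Kato--Ponce inequality \eqref{kp1} on $\mathbb{T}$ at a.e.\ fixed time $t$, with the admissible exponents $p=2$ and $p_1=p_2=p_3=p_4=4$, and then take the $L^2_t$ norm and use Hölder in time ($\|FG\|_{L^2_t}\le\|F\|_{L^4_t}\|G\|_{L^4_t}$), obtaining
\[\|uv\|_{X^{s,0}}\lesssim\big\|\la\partial_x\ra^su\big\|_{L^4_{t,x}}\|v\|_{L^4_{t,x}}+\big\|\la\partial_x\ra^sv\big\|_{L^4_{t,x}}\|u\|_{L^4_{t,x}};\]
for $s=0$ the same bound is immediate from Cauchy--Schwarz, $\|uv\|_{L^2}\le\|u\|_{L^4}\|v\|_{L^4}$.

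Each $L^4_{t,x}$ factor is then controlled by the $L^4$-Strichartz estimate $\|w\|_{L^4_{t,x}(\mathbb{R}\times\mathbb{T})}\lesssim\|w\|_{X^{0,3/8}}$, valid at the endpoint $b=\tfrac38$ (the diagonal case of the refined bilinear $L^4$ estimate). Since the space--time Fourier transform of $\la\partial_x\ra^sw$ is $\la k\ra^s\wh w$, we have $\|\la\partial_x\ra^sw\|_{X^{0,3/8}}=\|w\|_{X^{s,3/8}}$; and, as $s\ge 0$, $\|w\|_{X^{0,3/8}}\le\|w\|_{X^{s,3/8}}$. Applying this with $w=\la\partial_x\ra^su$ (respectively $w=v$) and symmetrically, the right-hand side above is $\lesssim\|u\|_{X^{s,3/8}}\|v\|_{X^{s,3/8}}$, which proves the estimate for $uv$. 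For the versions with $u\ov v$ or $\ov u\,\ov v$ the argument is unchanged once one notes that $\la\partial_x\ra^s$ commutes with complex conjugation (its symbol $\la k\ra^s$ being real and even), so $\la\partial_x\ra^s\ov w=\ov{\la\partial_x\ra^sw}$, and that conjugation preserves every $L^4_{t,x}$ norm appearing above; the $X^{0,3/8}$-bounds are then applied to the un-conjugated functions exactly as before.

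The only substantive ingredient --- and the main obstacle --- is the endpoint $L^4$-Strichartz / refined bilinear estimate at $b=\tfrac38$: the naive transference from the $L^4$ Strichartz inequality for the free evolution yields $\|w\|_{L^4}\lesssim\|w\|_{X^{0,b}}$ only for $b>\tfrac38$, with a logarithmic loss at the endpoint caused by frequency-comparable (near-resonant) interactions; the bilinear refinement closes this gap by extracting a gain when the two input frequencies are well separated and handling the comparable-frequency regime via the divisor bound for representations supported on the parabola $\{(k,k^2)\}$. Everything else above --- Plancherel, Kato--Ponce, Hölder --- is routine book-keeping.
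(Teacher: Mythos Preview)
Your argument is correct and coincides with the paper's proof: both reduce $\|uv\|_{X^{s,0}}$ to $\|\la\partial_x\ra^s(uv)\|_{L^2_{t,x}}$, apply Kato--Ponce with $L^4\times L^4$, and close with the endpoint $L^4$ estimate $\|w\|_{L^4_{t,x}}\lesssim\|w\|_{X^{0,3/8}}$ obtained as the diagonal case of Kishimoto's refined bilinear $L^4$ bound. Your additional remarks on the conjugated cases and on why the endpoint $b=\tfrac38$ holds are accurate elaborations that the paper leaves implicit.
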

	\begin{proof}
		Recalling the refined bilinear $L^4$ estimate \cite[Lemma 3.1]{kishimoto2013sharp}:
		\begin{align*}
			\|uv\|_{L^{2}_{}(\mathbb{R}\times\mathbb{T})}\lesssim \|u\|_{X^{0,b}}\|v\|_{X^{0, b^{\prime}}},\quad
			\mathrm{for}\quad  b, b^{\prime}>\frac{1}{4} \quad \mathrm{satisfying}\quad b+b^{\prime}\ge \frac{3}{4},
		\end{align*}
		taking $v=u$ and $b=b^{\prime}=\frac{3}{8}$, we obtain
		\begin{align}\label{l4}
			\|u\|_{L^4_{}(\mathbb{R}\times\mathbb{T})}\lesssim \|u\|_{X^{0,\frac{3}{8}}}.
		\end{align}
		Employing \eqref{kp1} and \eqref{l4}, we are led to
		\begin{align*}
			\|uv\|_{X^{s, 0}}&=\|\la \partial_x \ra^s (uv)\|_{X^{0, 0}}=
			\left\|\la \partial_x\ra^s (uv)\right\|_{L^2(\mathbb{R}\times\mathbb{T})}\\
			&\lesssim \|\la \partial_x\ra^s u\|_{L^4(\mathbb{R}\times\mathbb{T})}\|v\|_{L^4(\mathbb{R}
				\times\mathbb{T})}+\|\la \partial_x\ra^sv\|_{L^4(\mathbb{R}\times\mathbb{T})}\|u\|_{L^4(\mathbb{R}\times\mathbb{T})}\notag\\
			&\lesssim \|u\|_{X^{s,\frac{3}{8}}}\|v\|_{X^{0,\frac{3}{8}}}+\|v\|_{X^{s,\frac{3}{8}}}\|u\|_{X^{0,\frac{3}{8}}}\notag\\
			&\lesssim \|u\|_{X^{s,\frac{3}{8}}}\|v\|_{X^{s,\frac{3}{8}}},
		\end{align*}
		which completes the proof.
	\end{proof}
	
	
	Next we present the result regarding the existence and uniqueness of solutions of the GB equation.
	\begin{proposition}\label{wellposed}
		For $u_0\in H^{s}$, $s\ge 0$ and $b\in(\frac{1}{2}, 1)$, there exists $T_s>0$ such that for any $T<T_s$ the GB equation \eqref{orieq} has a unique solution $u\in X^{s, b}(T)\subseteq C([0, T], H^{s})$.
	\end{proposition}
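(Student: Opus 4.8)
The plan is to prove Proposition \ref{wellposed} by the standard Bourgain-space fixed-point argument applied to the Duhamel (integral) formulation of \eqref{orieq}. Writing $S(t)={\rm e}^{it\la\p_x^2\ra}$ and
\[
N(u)=-\frac{i}{4}\la\p_x^2\ra^{-1}\big[2(u+\ov u)+\p_x^2(u+\ov u)^2\big],
\]
the mild solution satisfies $u(t)=S(t)u_0+\int_0^t S(t-t')N(u(t'))\,dt'$. We localize in time with a cutoff $\eta\in C_c^\infty(\mathbb{R})$, $\eta\equiv 1$ on $[-1,1]$, and define the map
\[
\Phi_{T}(u)(t)=\eta(t)S(t)u_0+\eta(t)\int_0^t S(t-t')\,\eta(t'/T)N(u(t'))\,dt',
\]
on the ball $B_R=\{u\in X^{s,b}:\|u\|_{X^{s,b}}\le R\}$ with $R\sim\|u_0\|_{H^s}$ and $T$ small. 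The aim is to show $\Phi_T$ maps $B_R$ into itself and is a contraction; the fixed point is then the desired solution in $X^{s,b}(T)$, and the embedding $X^{s,b}(T)\hookrightarrow C([0,T],H^s)$ from \eqref{lemma11} gives continuity in time.

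The key estimates are supplied by the lemmas already in the excerpt. For the linear term, \eqref{lemma13} gives $\|\eta S(t)u_0\|_{X^{s,b}}\lesssim\|u_0\|_{H^s}$. For the Duhamel term, \eqref{lemma14} bounds it by $\|\eta(t/T)N(u)\|_{X^{s,b-1}}$, and I then gain a small power of $T$ via \eqref{lemma15}: choosing an auxiliary exponent $b'$ with $\tfrac14<b'<b-\tfrac12$ (so that, say, $b-1<b'-1<\tfrac12$... more precisely applying \eqref{lemma15} in the second slot), one has $\|\eta(t/T)N(u)\|_{X^{s,b-1}}\lesssim T^{\theta}\|N(u)\|_{X^{s,-b'}}$ for some $\theta>0$, provided $-\tfrac12<b-1\le -b'<\tfrac12$. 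It therefore remains to estimate $\|N(u)\|_{X^{s,-b'}}$. The linear-in-$u$ part $\la\p_x^2\ra^{-1}(u+\ov u)$ is harmless: $\la\p_x^2\ra^{-1}$ is smoothing of order two and $\|{\cdot}\|_{X^{s,-b'}}\lesssim\|{\cdot}\|_{X^{s,0}}\lesssim\|{\cdot}\|_{X^{s,b}}$ by Plancherel, using \eqref{bourelation} for the conjugate. The quadratic part is the crucial one: since $\la\p_x^2\ra^{-1}\p_x^2$ is a bounded Fourier multiplier (of order zero), one has
\[
\big\|\la\p_x^2\ra^{-1}\p_x^2(u+\ov u)^2\big\|_{X^{s,-b'}}\lesssim\big\|(u+\ov u)^2\big\|_{X^{s,0}}\lesssim\|(u+\ov u)\|_{X^{s,3/8}}^2\lesssim\|u\|_{X^{s,b}}^2,
\]
where the middle inequality is exactly the bilinear estimate of Lemma \ref{bilinear} (applied to each of $uu$, $u\ov u$, $\ov u\ov u$) and the last uses $3/8<1/2<b$ so that $X^{s,b}\hookrightarrow X^{s,3/8}$. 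Collecting terms, $\|\Phi_T(u)\|_{X^{s,b}}\le C\|u_0\|_{H^s}+CT^{\theta}(R+R^2)$; taking $R=2C\|u_0\|_{H^s}$ and then $T=T_s$ small enough that $CT^{\theta}(R+R^2)\le R/2$ gives the self-mapping property. An entirely analogous computation on the difference $\Phi_T(u)-\Phi_T(v)$, using bilinearity to write $(u+\ov u)^2-(v+\ov v)^2=\big((u-v)+\ov{(u-v)}\big)\big((u+v)+\ov{(u+v)}\big)$ and Lemma \ref{bilinear}, yields $\|\Phi_T(u)-\Phi_T(v)\|_{X^{s,b}}\le CT^{\theta}(1+R)\|u-v\|_{X^{s,b}}$, a contraction after possibly shrinking $T$. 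Uniqueness in $X^{s,b}(T)$ follows from the contraction property by the usual argument (difference of two solutions satisfies the same fixed-point identity on a short interval, hence vanishes, then one iterates across $[0,T]$).

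The main obstacle — or rather the only genuinely nontrivial analytic input — is the bilinear estimate controlling $\|(u+\ov u)^2\|_{X^{s,0}}$, but this has already been isolated and proved as Lemma \ref{bilinear} (via the refined bilinear $L^4$ estimate of Kishimoto and the Kato-Ponce inequality), so the remaining work is the bookkeeping of Bourgain exponents: one must verify that a single $b\in(\tfrac12,1)$ together with an auxiliary $b'$ can be chosen compatibly so that \eqref{lemma14}, \eqref{lemma15}, the embedding $X^{s,b}\hookrightarrow X^{s,3/8}$, and the $T$-gain $\theta=b-b'>0$ all hold simultaneously; since $3/8<1/2<b<1$ there is ample room (for instance any $b'\in(3/8,1/2)$ with $b\le 1-b'$... one checks the precise inequalities $\tfrac14<b'$, $b'\le b$, $b-1\le -b'$, i.e. $b'\le 1-b$, are consistent). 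A secondary point requiring a line of justification is that the two factors $\la\p_x^2\ra^{-1}$ and $\la\p_x^2\ra^{-1}\p_x^2$ are bounded on $X^{s,\beta}$ for every $\beta$, which is immediate since they are time-independent Fourier multipliers in $x$ that are $O(1)$ in $k$. I expect the whole proof to be short given the machinery assembled above; the delicate regularity count that restricts other methods to $s>1/2$ does not enter here because the nonlinearity, after the reformulation, is estimated through the $X^{s,0}$ bilinear bound rather than an algebra property of $H^s$.
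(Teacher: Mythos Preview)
Your approach is essentially the same as the paper's: a contraction-mapping argument on a time-localized Duhamel functional, using \eqref{lemma13} for the free evolution, \eqref{lemma14} for the Duhamel integral, \eqref{lemma15} for the small-time gain, and Lemma~\ref{bilinear} for the quadratic term. The paper arrives at $\|F_{u_0}^\delta(v)\|_{X^{s,b}}\lesssim\|u_0\|_{H^s}+\delta^{1-b}(\|v\|_{X^{s,b}}+\|v\|_{X^{s,b}}^2)$ and the contraction estimate \eqref{stafun}, then iterates and refers to \cite{farah2009local} for uniqueness.

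One remark on your exponent bookkeeping: the auxiliary $b'$ is unnecessary, and your constraints on it are in fact inconsistent. You require both $b'>\tfrac14$ and $b'\le 1-b$; for $b>\tfrac34$ this is empty, so as written your argument only covers $b\in(\tfrac12,\tfrac34)$. The paper avoids this by applying \eqref{lemma15} directly from $X^{s,b-1}$ to $X^{s,0}$ (i.e.\ taking your $-b'=0$), which is valid for every $b\in(\tfrac12,1)$ since $-\tfrac12<b-1\le 0<\tfrac12$, and yields the clean gain $\delta^{1-b}$. With that simplification your sketch matches the paper's proof.
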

	\begin{proof}
		For $\delta>0$, we define the truncated functional
		\be\label{functional}
		\begin{split}
			F_{u_0}^{\delta}(v):=& \ \eta(t){\rm e}^{it\la \p_x^2\ra}u_0-\frac{i}{2}\la \p_x^2\ra^{-1}\eta(t)\int_0^t{\rm e}^{i(t-\theta)\la \p_x^2\ra}\eta\left(\frac{\theta}{\delta}\right)[v(\theta)+
			\ov{v}(\theta)]d\theta\\
			&-\frac{i}{4}\la \p_x^2\ra^{-1}\p_x^2 \;\eta(t)\int_0^t{\rm e}^{i(t-\theta)\la \p_x^2\ra}\eta\left(\frac{\theta}{\delta}\right)[v(\theta)+\ov{v}(\theta)]^2d\theta,
		\end{split}
		\ee
		where $\eta: \mathbb{R}\rightarrow[0,1]$ is a smooth nonnegative function that takes $1$ on $[-1, 1]$ and is supported in  $[-2, 2]$.
		Applying \eqref{lemma14}, \eqref{lemma15} and \eqref{lemma12}, we can bound the linear term by
		\begin{align*}
			\bigg\|\eta(t)\int_0^t&{\rm e}^{i(t-\theta)\la \p_x^2\ra}\eta\left(\frac{\theta}{\delta}\right)[v(\theta)+\ov{v}(\theta)]d\theta\bigg\|_{X^{s,b}}\lesssim \left\|\eta\left(\frac{\theta}{\delta}\right)[v(\theta)+\ov{v}(\theta)]\right\|_{X^{s,b-1}}\notag\\
			&\qquad\qquad\qquad\qquad\qquad\lesssim \delta^{1-b}\|v+\ov{v}\|_{X^{s,0}}\lesssim \delta^{1-b}\|v\|_{X^{s,0}}\lesssim \delta^{1-b}\|v\|_{X^{s,b}}.
		\end{align*}
		Similarly, the nonlinear term can be bounded by
		\begin{align*}
			\bigg\|\eta(t)\int_0^t&{\rm e}^{i(t-\theta)\la \p_x^2\ra}\eta\left(\frac{\theta}{\delta}\right)\Big(v(\theta)+\ov{v}(\theta)\Big)^2d\theta\bigg\|_{X^{s,b}}\lesssim \left\|\eta\left(\frac{t}{\delta}\right)\left(v(t)+\ov{v}(t)\right)^2\right\|_{X^{s,b-1}}\notag\\
			&\qquad\qquad\qquad\qquad\lesssim \delta^{1-b}\left\|\left(v+\ov{v}\right)^2\right\|_{X^{s,0}}\lesssim \delta^{1-b}\|v\|^2_{X^{s,\frac{3}{8}}}\lesssim \delta^{1-b}\|v\|^2_{X^{s,b}},
		\end{align*}
		where we have used Lemma \ref{bilinear} for the last second inequality. Therefore, we conclude
		\[
		\|F_{u_0}^{\delta}(v)\|_{X^{s,b}}\lesssim \|u_0\|_{H^{s}}+\delta^{1-b}(\|v\|_{X^{s,b}}+\|v\|_{X^{s,b}}^2).
		\]
		Similarly, for $v_1$ and $v_2$ lying in the closed ball $B(0, R_0)$ of $X^{s,b}$ with $R_0=R_0(\|u_0\|_{H^{s}})$, we get
		\begin{align}\label{stafun}
			\|F_{u_0}^{\delta}(v_1)-F_{u_0}^{\delta}(v_2)\|_{X^{s,b}}\le C\delta^{1-b}(1+R_0)\|v_1-v_2\|_{X^{s,b}}.
		\end{align}
		Thus, $F_{u_0}^\delta$ is a contraction on $B(0, R_0)$ if $\delta=\delta_1\in(0,1]$ is sufficiently small. By employing the fixed-point theorem, we get that there exists a fixed-point $v$ for $F_{u_0}^{\delta_1}$, i.e., $F_{u_0}^{\delta_1}(v)=v$.
		Therefore $v|_{[0,\delta_1]}=u\in X^{s, b}(\delta_1)$, where $u$ is the solution of \eqref{orieq}. For uniqueness, we refer to the similar proof presented in \cite{farah2009local}.
		By iterating this procedure and employing standard arguments, it can be de\-duced that the solution can be extended to $[0, T_s)$ with maximum time $T_s\in(0, \infty]$.
	\end{proof}
	
	Next we examine the well-posedness of the projected equation \eqref{truneq}. For this purpose, we define the truncated functional
	\be\label{truncatedfunctional}
	\begin{split}
		F_{u_0}^{\tau,\delta}(v)&:=\eta(t){\rm e}^{it\la \p_x^2\ra}\Pi_{\tau}u_0-\frac{i}{2}\la \p_x^2\ra^{-1}\eta(t)\int_0^t{\rm e}^{i(t-\theta)\la \p_x^2\ra}\eta\left(\frac{\theta}{\delta}\right)\Pi_{\tau}[v(\theta)+
		\ov{v}(\theta)]d\theta\\
		&\quad\,\,-\frac{i}{4}\la \p_x^2\ra^{-1}\p_x^2 \;\eta(t)\int_0^t{\rm e}^{i(t-\theta)\la \p_x^2\ra}\eta\left(\frac{\theta}{\delta}\right)\Pi_{\tau}[\Pi_{\tau}v(\theta)+\Pi_{\tau}\ov{v}(\theta)]^2d\theta.
	\end{split}
	\ee
	We will prove that for given initial data $u_0\in H^s$ and any $T<T_s$,
	where $T_s$ is the maximum existence time of the solution of \eqref{orieq}, the solution $u_{\tau}$ of the projected equation \eqref{truneq} exists on $[0, T]$ for $\tau$ sufficiently small. We will acomplish this by estimating the difference between $u$ and $u_{\tau}$.
	
	\begin{proposition}\label{wellposednessforpj}
		For $u_0\in H^{s}$ ($s\ge 0$) and $b\in(\frac{1}{2}, 1)$, let $[0,T_s)$ denote the maximum existence interval of \eqref{orieq}. Then, for any $T<T_{s}$, there exists $\tau_0>0$ such that for $\tau\le \tau_0$, there is a unique solution $u_{\tau}$ of \eqref{truneq} satisfying $u_{\tau}\in X^{s, b}(T)\subseteq C([0, T], H^{s})$,  and
		\[
		\|u-u_{\tau}\|_{L^{\infty}([0, T^{}_{}], L^2)}\lesssim \|u-u_{\tau}\|_{X^{0, b}(T_{})}\le C_T\tau^{\frac{s}{2}}.
		\]
	\end{proposition}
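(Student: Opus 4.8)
\emph{Strategy.} Since $T<T_s$, Proposition~\ref{wellposed} gives the solution $u\in X^{s,b}(T)$ of \eqref{orieq}; put $M:=\|u\|_{X^{s,b}(T)}<\infty$. The plan is to run a bootstrap over a \emph{fixed} number $J=O(1)$ of short subintervals of $[0,T]$, controlling $u_\tau$ through a difference estimate carried out in the \emph{low} norm $X^{0,b}$. The first ingredient is an $L^2$-based local theory for \eqref{truneq}: repeating the fixed-point argument of Proposition~\ref{wellposed} for the functional $F^{\tau,\delta}_{u_0}$ of \eqref{truncatedfunctional}, but now in a ball of $X^{0,b}$, and using that $\Pi_\tau$ is bounded by $1$ on every Bourgain space, that $\la\p_x^2\ra^{-1}$ and $\la\p_x^2\ra^{-1}\p_x^2$ are bounded Fourier multipliers, and the case $s=0$ of Lemma~\ref{bilinear}, one finds
\[
\|F^{\tau,\delta}_{u_0}(v)\|_{X^{0,b}}\lesssim\|u_0\|_{L^2}+\delta^{1-b}\bigl(\|v\|_{X^{0,b}}+\|v\|_{X^{0,b}}^2\bigr),
\]
with a matching Lipschitz bound; hence \eqref{truneq} has a unique solution $u_\tau\in X^{0,b}(\delta)$ on an interval whose length $\delta$ depends \emph{only} on $\|u_0\|_{L^2}$, with $\delta\gtrsim(1+\|u_0\|_{L^2})^{-1/(1-b)}$. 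Because $e^{it\la\p_x^2\ra}$, $\la\p_x^2\ra^{-1}$ and $\Pi_\tau$ all commute, the Duhamel formula shows $u_\tau(t)\in\mathrm{Range}\,\Pi_\tau$ for every $t$; by \eqref{lemma17} this solution therefore lies in $X^{s,b}$ locally (at the price of a $\tau^{-s/2}$ factor) and in particular in $C(H^s)$, and uniqueness already holds in $X^{0,b}$.

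\emph{Difference estimate on a short interval.} Fix $\delta\in(0,1]$ (to be chosen small, depending only on $M$), suppose $u_\tau$ exists on $I=[a,a+\delta]\subset[0,T]$, and set $w=u-u_\tau$. On $I$ both $u$ and $u_\tau$ satisfy the localized Duhamel identities built from \eqref{functional}, \eqref{truncatedfunctional} (translated to start at $a$; the spaces $X^{s,b}$ are $t$-translation invariant), so $w$ obeys a Duhamel identity with data $w(a)$ and source $N(u)-N_\tau(u_\tau)$, the difference of the unprojected and the projected nonlinearities. I would split
\[
N(u)-N_\tau(u_\tau)=\underbrace{\bigl[N(u)-N_\tau(u)\bigr]}_{\text{projection consistency}}+\underbrace{\bigl[N_\tau(u)-N_\tau(u_\tau)\bigr]}_{\text{Lipschitz in }w}.
\]
The consistency term is $(I-\Pi_\tau)$ applied to $u+\bar u$ and to $(u+\bar u)^2-\Pi_\tau(\Pi_\tau u+\Pi_\tau\bar u)^2$; expanding this square into monomials in $u,\bar u$, using \eqref{lemma16} to trade $\tau^{s/2}$ for $s$ spatial derivatives (which are available because \emph{every} factor here is $u$, never $w$), and Lemma~\ref{bilinear} in its $uv$, $u\bar v$, $\bar u\bar v$ forms (so that conjugates cost no modulation regularity, and conjugates of un-squared factors are anyway harmless since $X^{0,0}=L^2$ and conjugation is an $L^2$-isometry), together with \eqref{lemma14}--\eqref{lemma15}, produces an $X^{0,b}(I)$-contribution $\lesssim\delta^{1-b}\tau^{s/2}(1+M^2)$. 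In the Lipschitz term, $N_\tau(u)-N_\tau(u_\tau)=\la\p_x^2\ra^{-1}\Pi_\tau$ applied to $w+\bar w$ and to $[\Pi_\tau(u+\bar u)]^2-[\Pi_\tau(u_\tau+\bar u_\tau)]^2=\Pi_\tau(w+\bar w)\cdot\Pi_\tau(u+\bar u+u_\tau+\bar u_\tau)$, and the same tools give an $X^{0,b}(I)$-contribution $\lesssim\delta^{1-b}\bigl(1+M+\|u_\tau\|_{X^{0,b}(I)}\bigr)\|w\|_{X^{0,b}(I)}$. Adding the free term $\|e^{i(t-a)\la\p_x^2\ra}w(a)\|_{X^{0,b}(I)}\lesssim\|w(a)\|_{L^2}$ from \eqref{lemma13}, and using the bootstrap hypothesis $\|u_\tau\|_{X^{0,b}(I)}\le 2M$, one obtains
\[
\|w\|_{X^{0,b}(I)}\le C_0\|w(a)\|_{L^2}+C_0\delta^{1-b}\tau^{s/2}(1+M^2)+C_0\delta^{1-b}(1+M)\|w\|_{X^{0,b}(I)},
\]
so choosing $\delta$ with $C_0\delta^{1-b}(1+M)\le\tfrac12$ absorbs the last term and leaves $\|w\|_{X^{0,b}(I)}\le 2C_0\|w(a)\|_{L^2}+2C_0\delta^{1-b}\tau^{s/2}(1+M^2)$.

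\emph{Global bootstrap.} Fix $\delta$ as above and also no larger than the local existence time associated with $L^2$-size $2M$ in the first step; set $t_j=j\delta$, $I_j=[t_j,t_{j+1}]$, $J=\lceil T/\delta\rceil$. I would prove by induction on $j=0,\dots,J$ that $u_\tau$ exists on $[0,t_j]$, that $\|u_\tau\|_{X^{0,b}(I_i)}\le 2M$ for $i<j$, and that $\|w(t_j)\|_{L^2}\le A_j\tau^{s/2}$ with $A_0\lesssim\|u_0\|_{H^s}$ (since $w(0)=(I-\Pi_\tau)u_0$) and $A_{j+1}\le c_1A_j+c_2$ for constants $c_1,c_2$ depending only on $M$ (the factor $c_1$ coming from \eqref{lemma11} via $\|w(t_{j+1})\|_{L^2}\lesssim\|w\|_{X^{0,b}(I_j)}$). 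In the inductive step, $\|u_\tau(t_j)\|_{L^2}\le\|u(t_j)\|_{L^2}+\|w(t_j)\|_{L^2}\le M+A_j\tau^{s/2}\le 2M$ for $\tau$ small, so the first step extends $u_\tau$ past $t_{j+1}$; the difference estimate on $I_j$ then applies (its quadratic remainders, suppressed above, being absorbed because $\|w\|_{X^{0,b}(I_j)}\le 1$ for $\tau$ small, which is justified by a standard continuity argument within $I_j$), yielding the recursion as well as $\|u_\tau\|_{X^{0,b}(I_j)}\le\|u\|_{X^{0,b}(I_j)}+\|w\|_{X^{0,b}(I_j)}\le 2M$. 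Since $J$ is fixed, the linear recursion gives $A_j\le C_T$ for all $j\le J$, hence $\|w\|_{X^{0,b}(I_j)}\le C_T\tau^{s/2}$ for every $j$; gluing the $X^{0,b}(I_j)$-extensions of $w$ by a time partition of unity yields $\|w\|_{X^{0,b}(T)}\lesssim_T\tau^{s/2}$, and \eqref{lemma11} finally delivers $\|u-u_\tau\|_{L^\infty([0,T],L^2)}\lesssim\|u-u_\tau\|_{X^{0,b}(T)}\le C_T\tau^{s/2}$, with uniqueness of $u_\tau$ in $X^{s,b}(T)$ inherited from that in $X^{0,b}$.

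\emph{Main difficulty.} The crux is the interplay of the first and last steps: one must keep $u_\tau$ alive all the way to $T$ with a per-step existence time that does \emph{not} degenerate as $\tau\to0$, which is precisely why the theory of \eqref{truneq} must be run in $L^2$ --- where $\Pi_\tau$ renders the nonlinearity tame --- rather than in $H^s$, in which $\|u_\tau(t)\|_{H^s}$ is only $O(\tau^{-s/2})$; the low-norm difference bound must then be fed back to keep $\|u_\tau\|_{X^{0,b}}$ under control, closing the loop over the $O(1)$ subintervals. The attendant technical point is the bookkeeping in the difference estimate: the $\tau^{s/2}$ gain must always be produced by $(I-\Pi_\tau)$ acting on a factor carrying $s$ spatial derivatives (namely $u$, never $w$), and products must be expanded into monomials before invoking Lemma~\ref{bilinear} so that complex conjugates never incur a loss of modulation regularity --- both of which are supplied by \eqref{lemma16}, \eqref{lemma17} and the conjugate-robust form of Lemma~\ref{bilinear}.
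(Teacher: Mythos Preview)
Your proposal is correct and follows essentially the same approach as the paper: partition $[0,T]$ into $O(1)$ subintervals of a fixed length $\delta$ (the paper calls it $\beta$) determined by $M=\|u\|_{X^{s,b}(T)}$, run the local theory and the difference estimate in $X^{0,b}$ so that the per-step existence time is $\tau$-independent, harvest the $\tau^{s/2}$ gain from $(I-\Pi_\tau)$ acting on $u$ via \eqref{lemma16} together with Lemma~\ref{bilinear} at $s=0$, and propagate the $L^2$-error by a linear recursion over the subintervals. The only cosmetic differences are that the paper expands $U^\beta_n-U^{\tau,\beta}_n$ directly (rather than grouping into ``consistency $+$ Lipschitz'') and handles the quadratic remainder in $\|w\|_{X^{0,b}}$ by absorbing it through the smallness of $\beta^{1-b}$ instead of an explicit bootstrap on $\|u_\tau\|_{X^{0,b}(I)}$.
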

	\begin{proof}
		For any $T<T_{s}$, there exists a constant $C_u>0$ such that
		\[
		\|u\|_{X^{s, b}(T)}\le C_u.
		\]
		In the following, we will construct the solution $u_{\tau}$ of \eqref{truneq} on $[0, T]$ and prove the proposition by induction.
		For $\beta$ to be determined (see \eqref{beta} below), we denote by $U^{\beta}_n, U^{\tau,\beta}_{n}\in B(0, R)\subset X^{s, b}$ the fixed-points of $F_{w_1}^{\beta}(v)$ \eqref{functional} and $F_{w_2}^{\tau,\beta}(v)$ \eqref{truncatedfunctional} with $w_1 = u((n-1)\beta)$ and $w_2 = u_{\tau}((n-1)\beta)$, which satisfy $U_n^{\beta}|_{[0, \beta]}=u|_{[(n-1)\beta, n\beta]}$ and $U_{n}^{\tau,\beta}|_{[0, \beta]}=u_{\tau}|_{[(n-1)\beta, n\beta]}$, respectively, where $1\le n\le T/\beta$ and $R=2C(C_u+1)$, with $C>1$ chosen below. Note that the choice of $R$ is independent of $n$ and $u_{\tau}$.
		

		For $k=0$, by using \eqref{lemma16}, we have
		\begin{align}\label{uutau0}
			\|u(0)-u_{\tau}(0)\|_{L^2}\le C_1\|u(0)\|_{H^s}\tau^{\frac{s}{2}}.
		\end{align}
		Next, we denote $N=T/\beta\in\mathbb{N}$ and recall that $\beta$ will be determined later. Suppose that for all $0\le k\le n-1$, the following holds
		\[
		\|U^{\beta}_{k}-U_{k}^{\tau, \beta}\|_{X^{0,b}(\beta)}\le C_{0, k}\tau^{\frac{s}{2}}.
		\]
		It will be shown below that these constants $C_{0,k}$ are uniformly bounded.
		When $k=n$, it holds that
		\begin{align*}
			U^{\beta}_{n}&(t)-U_{n}^{\tau, \beta}(t)\notag\\
			=&\ \eta(t){\rm e}^{it\la \p_x^2\ra}\big(u((n-1)\beta)-u_{\tau}((n-1)\beta)\big)-\frac{i}{2}\la \p_x^2\ra^{-1}\eta(t)\int_0^t {\rm e}^{i(t-\theta)\la \p_x^2\ra}\notag\\
			&\times \bigg[\eta\left(\frac{s}{\beta}\right)\Big(\Pi_{\tau}U^{\beta}_n(\theta)-
			\Pi_{\tau}U^{\tau, \beta}_{n}(\theta)+(1-\Pi_{\tau})U^{\beta}_n(\theta)\notag\\
			&\quad\quad\quad\quad\quad\quad\quad\quad\quad\quad\quad\quad\quad\quad\quad+
			\Pi_{\tau}\ov{U}^{\beta}_n(\theta)-\Pi_{\tau}\ov{U}_{n}^{\tau, \beta}(\theta)+(1-\Pi_{\tau})\ov{U}^{\beta}_n\Big)\bigg]d\theta\notag\\
			&-\frac{i}{4}\la \p_x^2\ra^{-1}\p_x^2\;\eta(t)\int_0^t {\rm e}^{i(t-\theta)\la \p_x^2\ra}\eta\left(\frac{\theta}{\beta}\right)\bigg[\left(1-\Pi_{\tau}\right)\left(U^{\beta}_n(\theta)+\ov{U}^{\beta}_n(\theta)\right)^2\notag\\
			&\quad\quad\quad\quad\quad\quad\quad\,\,\, -\Pi_{\tau}\left[\left(\Pi_{\tau}U_{n}^{\tau, \beta}(\theta)+\Pi_{\tau}\ov{U}_{n}^{\tau, \beta}(\theta)\right)^2-\left(U^{\beta}_n(\theta)+\ov{U}^{\beta}_n(\theta)\right)^2\right]\bigg]d\theta.
		\end{align*}
		Denote $r_j^s=\|u(j\beta)-u_{\tau}(j\beta)\|_{H^s}$ for $0\le j\le N$. By applying \eqref{lemma13}, \eqref{lemma15}, \eqref{lemma14} and \eqref{lemma16}, and noting that
		\be\label{cu}
		\|U^{\beta}_n\|_{X^{0,b}}\le \|U^{\beta}_n\|_{X^{s,b}}\le C_u,
		\ee
		we obtain
		\begin{align*}
			\|U^{\beta}_n-U_{n}^{\tau, \beta}\|_{X^{0,b}}
			\le&\ C_2r^0_{n-1}+C_2\beta^{1-b}\left(\|U_n^{\beta}-U_{n}^{\tau, \beta}\|_{X^{0,b}}+\tau^{\frac{s}{2}}\|U_n^{\beta}\|_{X^{s,b}}\right)\notag\\
			& +C_2\beta^{1-b}\big(\tau^{\frac{s}{2}}\|U_n^{\beta}\|_{X^{s,b}}^2+
			\|U_n^{\beta}-U_{n}^{\tau, \beta}\|_{X^{0,b}}\notag\\
			&\quad\quad\quad\quad\quad\quad\quad\quad\ \quad\quad\quad\times(\|U^{\beta}_n\|_{X^{0,b}}+\|U_n^{\beta}-U_{n}^{\tau,\beta}\|_{X^{0,b}})\big)\notag\\
			\le&\ C_2\left(r^0_{n-1}+\beta^{1-b}C_u\tau^{\frac{s}{2}}+\beta^{1-b}C_u^2\tau^{\frac{s}{2}}\right)\notag\\
			& +C_2\beta^{1-b}\left(1+C_u\right)\|U_n^{\beta}-U_{n}^{\tau, \beta}\|_{X^{0,b}}+C_2\beta^{1-b}\|U_n^{\beta}-U_{n}^{\tau, \beta}\|_{X^{0,b}}^2,
		\end{align*}
		where $C_2$ only depends on $b$ and $\eta$.
		Thus, by choosing an appropriate $\beta\in(0, 1)$ (as will be specified in \eqref{beta} below), and using \eqref{lemma11}, we obtain
		\begin{align}\label{rn0}
			r_n^0\le \widetilde{C} \|U_n^{\beta}-U_{n}^{\tau, \beta}\|_{X^{0,b}}\le C_3\left(r^0_{n-1}+C_4\tau^{\frac{s}{2}}\right),
		\end{align}
		where $C_4=\frac{C_u}{2C_2}$, which implies that
		\begin{align*}
			r^0_{n}+\frac{C_3C_4}{C_3-1}\tau^{\frac{s}{2}}\le C_3\left(r^0_{n-1}+\frac{C_3C_4}{C_3-1}\tau^{\frac{s}{2}}\right),
		\end{align*}
		where we may assume $C_3>1$ without loss of generality. Solving this recursion yields
		\begin{align*}
			r^0_{n}+\frac{C_3C_4}{C_3-1}\tau^{\frac{s}{2}}\le C_3\left(r^0_{n-1}+\frac{C_3C_4}{C_3-1}\tau^{\frac{s}{2}}\right)\le \ldots\le (C_3)^n\left(r_{0}^0+\frac{C_3C_4}{C_3-1}\tau^{\frac{s}{2}}\right),
		\end{align*}
		which implies
		\[
		r^0_{n}\le (C_3)^n\left(C_1\|u(0)\|_{H^s}+\frac{C_3C_4}{C_3-1}\right)\tau^{\frac{s}{2}}-\frac{C_3C_4}{C_3-1}\tau^{\frac{s}{2}}\\
		=:C_{0, n}\tau^{\frac{s}{2}}.
		\]
		
		From this it follows that the sequence $\{C_{0, j}\}_{j=1}^{n}$ has an upper bound
		\begin{align*}
			C_0={\rm e}^{\frac{T}{\beta}\log C_3}\left(C_1\|u_0\|_{H^s}+\frac{C_3C_4}{C_3-1}\right)-\frac{C_3C_4}{C_3-1},
		\end{align*}
		which is independent of $n$. Hence, we obtain
		\begin{align}\label{r0n}
			r^0_n \le C_0\tau^{\frac{s}{2}}.
		\end{align}
		From \eqref{rn0}, we derive that
		\begin{align*}
			\|U_n^{\beta}-U_{n}^{\tau,\beta}\|_{X^{0,b}}\le C_3\left(C_{0, n-1}+C_4\tau^{\frac{s}{2}}\right)\le C_3\left(C_0+C_4\right)\tau^{\frac{s}{2}}.
		\end{align*}
		By using \eqref{lemma17}, we obtain
		\begin{equation*}
			\begin{aligned}
			\|U^{\tau,\beta}_{n}\|_{X^{s,b}}&\le \|\Pi_{\tau}U^{\beta}_{n}\|_{X^{s,b}}+\|\Pi_{\tau}U^{\beta}_{n}-U^{\tau,\beta}_{n}\|_{X^{s,b}}\\
				&\le C_u+\tau^{-\frac{s}{2}}\|U^{\beta}_{n}-U^{\tau, \beta}_{n}\|_{X^{0,b}}\\
				&\le C_u+C_3(C_0+C_4),
			\end{aligned}
		\end{equation*}
		which gives the boundedness of $U_{n}^{\tau,\beta}$ in $X^{s, b}$.
		
		Taking $\tau_0=\left(1/C_0\right)^{2/s}$ and applying \eqref{r0n}, we have for $\tau\le \tau_0$
		\begin{align*}
			\|u_{\tau}(n\beta)\|_{L^2}\le \|u(n\beta)\|_{L^2}+r_n^0\le C_5C_u+1,
		\end{align*}
		where $C_5$ is a constant from the embedding
		$X^{0, b}\hookrightarrow L^{\infty}(\mathbb{R},L^2)$, where $b>\tfrac{1}{2}$.
		Recalling the restrictions imposed by inequalities \eqref{cu} and \eqref{rn0}, we can choose
		\begin{align}\label{beta}
			\beta\le\Big[ \frac{1}{2C\big(1+R\big)}\Big]^{1/(1-b)}<1,
		\end{align}
		such that $T/\beta\in \mathbb{N}$, where $R=2C(C_u+1)$ and $C=\max\{1, C_2, C_5\}$. Using $u_{\tau}(n\beta)$ as the initial value for the next iteration, ultimately we obtain the solution $u_{\tau}\in C([0, (n+1)\beta], L^2)\cap X^{s,b}((n+1)\beta)$. The uniqueness is guaranteed by a similar proof as in \cite{farah2009local} and the proof is completed.
	\end{proof}
	
	\section{Discrete Bourgain spaces and some essential estimates}\label{DBS}
	In this section, we revisit the definition of discrete Bourgain spaces and investigate their properties. This framework was initially introduced by Ostermann, Rousset \& Schratz in their pioneering work \cite{Ostermann2022JEMS}. We also refer to \cite{ji2023Lowregularity,Ostermann2022splitting,rousset2022Convergence} and references therein.
	
	The Fourier transform of a sequence of periodic functions $\{u_n(x)\}_{n\in \mathbb{Z}}$, where $x\in\mathbb{T}$, is defined as
	\begin{align*}
		\mathcal{F}_{\tau, x}(u_n)(\sigma, k)=\widetilde{u_n}(\sigma, k)=\tau\sum\limits_{m\in \mathbb{Z}}\wh{u_m}(k){\rm e}^{-im\tau\sigma}, \ \ \wh{u_m}(k)=\frac{1}{2\pi}\int_{-\pi}^{\pi}u_m(x){\rm e}^{-ikx}dx.
	\end{align*}
	Obviously it satisfies Parseval's identity: $\|\widetilde{u_n}\|_{L^2l^2}=\|u_n\|_{l_{\tau}^2L^2}$, where
	\begin{align*}
		\|\widetilde{u_n}\|^2_{L^2l^2}=\int_{-\frac{\pi}{\tau}}^{\frac{\pi}{\tau}}\sum\limits_{k\in \mathbb{Z}}|\widetilde{u_n}(\sigma, k)|^2d\sigma, \quad \|u_n\|^2_{l_{\tau}^2L^2}=\tau\sum\limits_{m\in \mathbb{Z}}\int_{-\pi}^{\pi}|u_m(x)|^2dx.
	\end{align*}
	For a continuous function $g$, the discrete Bourgain space $X^{s,b}_{\tau, \sigma=g(k)}$ with $s\ge 0$, $b\in \mathbb{R}$, $\tau>0$ is defined by
	\be\label{disdef}
	\|u_n\|_{X^{s,b}_{\tau, \sigma=g(k)}}=\left\|\la k\ra^s\la d_{\tau}(\sigma-g(k))\ra^b\widetilde{u_n}(\sigma, k)\right\|_{L^2l^2},
	\ee
	where $d_{\tau}(\sigma)=\frac{{\rm e}^{i\tau\sigma}-1}{\tau}$ is a $\frac{2\pi}{\tau}$ periodic function satisfying $|d_{\tau}(\sigma)|\sim|\sigma|$ for $|\tau\sigma|\le \pi$.
	Note that, in order to keep in line with the definition of Bourgain spaces in Section~3, this notation is different from that in \cite{ji2023Lowregularity,Ostermann2022splitting,Ostermann2022JEMS, rousset2022Convergence}, where
	\be\label{refer}
	\|u_n\|_{X^{s,b}_{\tau, \sigma=g(k)}}=\left\|\la k\ra^s\la d_{\tau}(\sigma+g(k))\ra^b\widetilde{u_n}(\sigma, k)\right\|_{L^2l^2},\,\,\, \widetilde{u_n}(\sigma, k)=\tau\sum\limits_{m\in \mathbb{Z}}\wh{u_m}(k){\rm e}^{im\tau\sigma}.
	\ee
	One can easily verify that these two definitions yield the same norm and correspondingly the same discrete Bourgain space.
	
	In the following, we always write $\|u_n\|_{X^{s, b}_{\tau, \sigma=\sqrt{k^4+1}}}$ as $\|u_n\|_{X^{s, b}_{\tau}}$ for short.
	For further simplification, we present the following lemma.
	\begin{lemma}\label{equvi}
		For $s\ge 0$, $b\in \mathbb{R}$ and $\tau>0$, it holds that
		\begin{align*}
			\left\|\la k\ra^s\la d_{\tau}(\sigma-\sqrt{k^4+1})\ra^b\widetilde{u_n}(\sigma, k)\right\|_{L^2l^2}\sim \left\|\la k\ra^s\la d_{\tau}(\sigma-k^2)\ra^b\widetilde{u_n}(\sigma, k)\right\|_{L^2l^2}.
		\end{align*}
	\end{lemma}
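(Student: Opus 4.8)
The plan is to reduce the asserted equivalence of norms to a \emph{pointwise} two-sided comparison of the two weights appearing in \eqref{disdef}, and then to establish that comparison by an elementary uniform Lipschitz bound on $d_\tau$. Concretely, since the two expressions in the statement differ only through the factors $\la d_\tau(\sigma-\sqrt{k^4+1})\ra^b$ and $\la d_\tau(\sigma-k^2)\ra^b$, it suffices to produce an absolute constant $c\ge 1$, independent of $\tau>0$, $k\in\mathbb{Z}$ and $\sigma$, such that
\[
\frac1c\,\la d_\tau(\sigma-k^2)\ra \;\le\; \la d_\tau(\sigma-\sqrt{k^4+1})\ra \;\le\; c\,\la d_\tau(\sigma-k^2)\ra .
\]
Granting this, raising to the power $b$ turns the constant into $c^{|b|}$ (here one uses $\la\,\cdot\,\ra\ge 1$, so the bound is harmless for negative $b$ as well), and then multiplying by $\la k\ra^s\,|\widetilde{u_n}(\sigma,k)|$ and taking the $L^2l^2$-norm in $(\sigma,k)$ yields the claim.

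For the pointwise comparison I would first record that, for every $\tau>0$ and all real $\alpha,\beta$,
\[
|d_\tau(\alpha)-d_\tau(\beta)|=\frac1\tau\big|{\rm e}^{i\tau\alpha}-{\rm e}^{i\tau\beta}\big|=\frac1\tau\big|1-{\rm e}^{i\tau(\beta-\alpha)}\big|\le|\alpha-\beta|,
\]
where the last step uses $|1-{\rm e}^{ix}|=2|\sin(x/2)|\le|x|$. Applying this with $\alpha=\sigma-\sqrt{k^4+1}$ and $\beta=\sigma-k^2$, and noting the trivial bound $0\le\sqrt{k^4+1}-k^2\le 1$ (equal to $1$ for $k=0$ and to $(\sqrt{k^4+1}+k^2)^{-1}<1$ otherwise), I obtain
\[
\big|d_\tau(\sigma-\sqrt{k^4+1})-d_\tau(\sigma-k^2)\big|\le 1 .
\]
It then remains to invoke the elementary inequality $\la a+\rho\ra\le\sqrt3\,\la a\ra$, valid whenever $|\rho|\le1$ (expand the squares and use $2|a|\le1+|a|^2$); applying it in both directions with $a=d_\tau(\sigma-k^2)$, resp. $a=d_\tau(\sigma-\sqrt{k^4+1})$, gives the displayed two-sided bound with $c=\sqrt3$.

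I do not expect a genuine obstacle here---the estimate is simply the discrete counterpart of \eqref{equiva}. The only point deserving care is that $d_\tau$ does \emph{not} admit a clean global lower bound of the form $|d_\tau(\sigma)|\gtrsim|\sigma|$ (that holds only on $|\tau\sigma|\le\pi$), so one should argue through the uniform Lipschitz bound above rather than try to transfer the continuous equivalence \eqref{equiva} directly; and, because $b$ is allowed to be negative, both inequalities must be tracked, not a single one-sided estimate.
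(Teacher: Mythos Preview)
Your proof is correct and follows essentially the same strategy as the paper: reduce to the pointwise equivalence $\la d_\tau(\sigma-\sqrt{k^4+1})\ra\sim\la d_\tau(\sigma-k^2)\ra$, and establish the latter via the bound $\big|\sqrt{k^4+1}-k^2\big|\le 1$ combined with the inequality $|e^{i\tau x}-1|\le\tau|x|$. The only cosmetic difference is that the paper works with $g_\tau(x)=e^{i\tau x}-1=\tau\,d_\tau(x)$ and its subadditivity $|g_\tau(x+y)|\le|g_\tau(x)|+|g_\tau(y)|$ to bound the ratio $(\tau+|g_\tau(\sigma-\sqrt{k^4+1})|)/(\tau+|g_\tau(\sigma-k^2)|)$ between $\tfrac12$ and $2$, whereas you phrase the same content as a uniform Lipschitz bound $|d_\tau(\alpha)-d_\tau(\beta)|\le|\alpha-\beta|$ and then use $\la a+\rho\ra\le\sqrt3\,\la a\ra$ for $|\rho|\le1$; these are equivalent since $\la x\ra\sim 1+|x|$.
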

	\begin{proof}
		It suffices to show that
		\[
		\la d_{\tau}(\sigma-\sqrt{k^4+1})\ra\sim\la d_{\tau}(\sigma-k^2)\ra.
		\]
		Denote $g_{\tau}(x)={\rm e}^{i\tau x}-1$, $x\in \mathbb{R}$. It is easily observed that $|g_{\tau}|$ is an even function satisfying
		\begin{align*}
			|g_{\tau}(x+y)|\le |g_{\tau}(x)|+|g_{\tau}(y)|,\quad
			\big| |g_{\tau}(x)|-|g_{\tau}(y)|\big| \le |g_{\tau}(x-y)|.
		\end{align*}
		Applying the above inequalities and noticing $|g_{\tau}(x)|\le \tau|x|$, one obtains
		\begin{align*}
			\frac{\tau+|g_{\tau}(\sigma-\sqrt{k^4+1})|}{\tau+|g_{\tau}(\sigma-k^2)|}\le&\ \frac{\tau+|g_{\tau}(\sigma-k^2)|+|g_{\tau}(\sqrt{k^4+1}-k^2)|}
			{\tau+|g_{\tau}(\sigma-k^2)|}\\
			\le&\ 1+\frac{\tau}{\tau+|g_{\tau}(\sigma-k^2)|}\le 2,
		\end{align*}
		and
		\begin{align*}
			\frac{\tau+|g_{\tau}(\sigma-\sqrt{k^4+1})|}{\tau+|g_{\tau}(\sigma-k^2)|}\ge&\ \frac{\tau+|g_{\tau}(\sigma-\sqrt{k^4+1})|}{\tau+|g_{\tau}(\sigma-\sqrt{k^4+1})|+
				|g_{\tau}(\sqrt{k^4+1}-k^2)|}\\
			=&\ 1-\frac{|g_{\tau}(\sqrt{k^4+1}-k^2)|}{\tau+|g_{\tau}(\sigma-\sqrt{k^4+1})|
				+|g_{\tau}(\sqrt{k^4+1}-k^2)|}\\
			\ge&\ 1-\frac{|g_{\tau}(\sqrt{k^4+1}-k^2)|}{\tau+|g_{\tau}(\sqrt{k^4+1}-k^2)|}\\
			=&\ \frac{1}{1+|g_{\tau}(\sqrt{k^4+1}-k^2)|/\tau}\ge\frac{1}{2},
		\end{align*}
		which completes the proof.
	\end{proof}

	Lemma \ref{equvi} shows that
	\be\label{conj}
	\|u_n\|_{X^{s,b}_\tau}=\|u_n\|_{X^{s,b}_{\tau, \sigma=\sqrt{k^4+1}}}\sim\|u_n\|_{X^{s,b}_{\tau, \sigma=k^2}}=\|\overline{u}_n\|_{X^{s,b}_{\tau, \sigma=-k^2}},\ee
	where $X^{s, b}_{\tau,\sigma=-k^2}$ is exactly the discrete Bourgain space defined in \cite{Ostermann2022splitting, Ostermann2022JEMS} for the Schr\"odinger equation with norm
	\[\|u_n\|_{X^{s,b}_{\tau, \sigma=-k^2}}=\left\|\la k\ra^s\la d_{\tau}(\sigma+k^2)\ra^b\widetilde{u_n}(\sigma, k)\right\|_{L^2l^2}.\]
	
	By using the equality \eqref{conj} and the properties established for $X^{s,b}_{\tau, \sigma=-k^2}$ in \cite{Ostermann2022splitting, Ostermann2022JEMS}, we can readily derive the following lemma.
	
	\begin{lemma}\label{4geyinli}
		For given $\{u_n(x)\}_{n\in\mathbb{Z}}$ and $\{v_n(x)\}_{n\in\mathbb{Z}}$, $x\in\mathbb{T}$, we have
		\begin{align}
			&\sup\limits_{\delta\in [-4,4]}\|{\rm e}^{i\tau\delta\la \p_x^2\ra}u_n\|_{X^{s,b}_{\tau}}\lesssim \|u_n\|_{X^{s,b}_{\tau}}, \quad s\ge 0, \ b\in\mathbb{R}, \label{lemma21}\\
			&\|u_n\|_{X^{0, b}_{\tau}}\lesssim \frac{1}{\tau^{b-b^{\prime}}}\|u_n\|_{X_\tau^{0, b^{\prime}}}, \quad b\ge b^{\prime}, \ b, b^{\prime}\in\mathbb{R}, \label{lemma22}\\
			&\|u_n\|_{l_{\tau}^{\infty}H^s}\lesssim \|u_n\|_{X^{s,b}_{\tau}}, \quad s\ge 0, \ b>\tfrac{1}{2}, \label{lemma23}\\
			&\|\Pi_{\tau}u_n\|_{l_{\tau}^4 L^4}\lesssim \|u_n\|_{X_{\tau}^{0,\frac{3}{8}}}, \label{lemma24}\\
			&\|\Pi_{\tau}(\Pi_{\tau}u_n\Pi_{\tau}v_n)\|_{X_{\tau}^{s, 0}}\lesssim \|u_n\|_{X_{\tau}^{s,\frac{3}{8}}}\|v_n\|_{X_{\tau}^{s,\frac{3}{8}}}, \quad s\ge 0. \label{lemma25}
		\end{align}
		Note that \eqref{lemma25} remains valid when substituting $\Pi_{\tau}u_n\Pi_{\tau}v_n$ with either $\Pi_{\tau}\overline{u}_n\Pi_{\tau}v_n$ or $\Pi_{\tau}\overline{u}_n\Pi_{\tau}\overline{v}_n$.
	\end{lemma}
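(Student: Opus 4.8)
The plan is to obtain each of the estimates \eqref{lemma21}--\eqref{lemma25}, which are the discrete Bourgain space counterparts of \eqref{lemma12}--\eqref{lemma13}, \eqref{lemma15}, \eqref{lemma11}, \eqref{l4} and Lemma \ref{bilinear} respectively, by transferring the corresponding estimates for the Schr\"odinger discrete Bourgain space $X^{s,b}_{\tau,\sigma=-k^2}$ proved in \cite{Ostermann2022splitting,Ostermann2022JEMS} (where, as noted, the equivalent weight \eqref{refer} is used). The transfer is carried out through the chain \eqref{conj}, i.e.\ $\|u_n\|_{X^{s,b}_{\tau}}\sim\|u_n\|_{X^{s,b}_{\tau,\sigma=k^2}}=\|\overline{u}_n\|_{X^{s,b}_{\tau,\sigma=-k^2}}$, in which the first relation is Lemma \ref{equvi} and the second is the discrete conjugation identity contained in \eqref{conj} together with $k^2=(-k)^2$. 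For each inequality one rewrites its left-hand side in terms of conjugated sequences, applies the known $\sigma=-k^2$ bound, and undoes the conjugation; the only point to monitor is how the operations appearing — the unimodular spatial multiplier $\mathrm{e}^{i\tau\delta\la\p_x^2\ra}$, the filter $\Pi_\tau$, complex conjugation, and pointwise products — commute with $w_n\mapsto\overline{w}_n$.

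Carrying this out: \eqref{lemma21} follows from the $\sigma=-k^2$ group bound via $\overline{\mathrm{e}^{i\tau\delta\la\p_x^2\ra}u_n}=\mathrm{e}^{-i\tau\delta\la\p_x^2\ra}\overline{u}_n$ together with $|\delta|\le 4$, and is in fact immediate since $\mathrm{e}^{i\tau\delta\la\p_x^2\ra}$ multiplies the space-time Fourier transform only by the unimodular factor $\mathrm{e}^{i\tau\delta\sqrt{k^4+1}}$. For \eqref{lemma22} and \eqref{lemma23} only the weights $\la k\ra^s$, $\la d_\tau(\sigma-g(k))\ra^b$ and the discrete time variable intervene, so they follow from the $\sigma=-k^2$ versions and Lemma \ref{equvi}, using that $\|\cdot\|_{l^\infty_\tau H^s}$ is invariant under both complex conjugation and the reflection $k\mapsto-k$. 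For \eqref{lemma24} and \eqref{lemma25} one uses $\overline{\Pi_\tau u_n}=\Pi_\tau\overline{u}_n$ ($\Pi_\tau$ being a real even Fourier multiplier), $\overline{u_nv_n}=\overline{u}_n\,\overline{v}_n$, and the conjugation invariance of $\|\cdot\|_{l^4_\tau L^4}$ and of $\|\cdot\|_{X^{s,0}_\tau}=\|\cdot\|_{l^2_\tau H^s}$; the three variants of \eqref{lemma25} ($\Pi_\tau u_n\Pi_\tau v_n$, $\Pi_\tau\overline{u}_n\Pi_\tau v_n$, $\Pi_\tau\overline{u}_n\Pi_\tau\overline{v}_n$) collapse to the corresponding $\sigma=-k^2$ statements after conjugating the whole product, and in any case $l^4_\tau L^4$ is insensitive to conjugation.

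No genuine obstacle is expected; the argument is a careful translation, and the only substantive import is \eqref{lemma24}, the discrete $L^4$ (Strichartz-type) estimate from \cite{Ostermann2022JEMS}. One may moreover note that \eqref{lemma25} need not be quoted separately: once \eqref{lemma24} is available it can be derived exactly as Lemma \ref{bilinear} was, by bounding $\|\Pi_\tau(\Pi_\tau u_n\Pi_\tau v_n)\|_{X^{s,0}_\tau}\le\|\la\p_x\ra^s(\Pi_\tau u_n\Pi_\tau v_n)\|_{l^2_\tau L^2}$, applying the Kato--Ponce inequality \eqref{kp1} in $x$ for each discrete time index, splitting $l^2_\tau=l^4_\tau\cdot l^4_\tau$ by H\"older, commuting $\la\p_x\ra^s$ with $\Pi_\tau$, and invoking \eqref{lemma24}. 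Finally, the one step that requires verification rather than citation — that replacing the true GB dispersion $\sqrt{k^4+1}$ by $k^2$ inside the modulation weight $\la d_\tau(\sigma-\cdot)\ra^b$ costs nothing — is precisely Lemma \ref{equvi}, already established above.
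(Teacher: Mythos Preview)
Your proposal is correct and follows essentially the same route as the paper: \eqref{lemma21}--\eqref{lemma24} are obtained by transferring the known $X^{s,b}_{\tau,\sigma=-k^2}$ estimates from \cite{Ostermann2022JEMS} via Lemma~\ref{equvi} and \eqref{conj}, and \eqref{lemma25} is then derived from \eqref{lemma24} together with the Kato--Ponce inequality \eqref{kp1}, exactly as you outline in your final paragraph. Your explicit bookkeeping of how conjugation interacts with $\Pi_\tau$, the group $\mathrm{e}^{i\tau\delta\la\p_x^2\ra}$, and pointwise products simply spells out what the paper's phrase ``directly derived'' leaves implicit.
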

	\begin{proof}
		Thanks to Lemma \ref{equvi} and \eqref{conj},
		\eqref{lemma21}-\eqref{lemma24} can be directly derived by referring to \cite[Lemmas 3.1, 3.6 and Remarks 3.2, 3.3]{Ostermann2022JEMS}.
		It remains to show \eqref{lemma25}. Utilizing \eqref{kp1} and \eqref{lemma24} results in
		\begin{align*}
			\|\Pi_{\tau}(\Pi_{\tau}u_n\Pi_{\tau}v_n)&\|_{X_{\tau}^{s, 0}}\sim \left\|\la \p_x\ra^{s}\Pi_{\tau}(\Pi_{\tau}u_n\Pi_{\tau}v_n)\right\|_{X_{\tau}^{0, 0}}\\
			&\lesssim \|\la \p_x\ra^{s}\Pi_{\tau}u_n\|_{l_{\tau}^4 L^4}\|\Pi_{\tau}v_n\|_{l_{\tau}^4 L^4}+\|\la \p_x\ra^{s}\Pi_{\tau}v_n\|_{l_{\tau}^4 L^4}\|\Pi_{\tau}u_n\|_{l_{\tau}^4 L^4}\\
			&\ = \|\Pi_{\tau}\la \p_x\ra^{s}u_n\|_{l_{\tau}^4 L^4}\|\Pi_{\tau}v_n\|_{l_{\tau}^4 L^4}+\|\Pi_{\tau}\la \p_x\ra^{s}v_n\|_{l_{\tau}^4 L^4}\|\Pi_{\tau}u_n\|_{l_{\tau}^4 L^4}\\
			&\lesssim \|u_n\|_{X^{s,\frac{3}{8}}}\|v_n\|_{X_{\tau}^{s,\frac{3}{8}}},
		\end{align*}
		and the proof is completed.
	\end{proof}
	
	\begin{lemma}\label{guanyueta}
		For given $\eta\in C_{c}^{\infty}(\mathbb{R})$, $s\ge 0$, $b\in\mathbb{R}$, $N\in \mathbb{N}$ and $\{u_n(x)\}_{n\in\mathbb{Z}}$, $x\in\mathbb{T}$, we have
		\begin{align}
			&\|\eta(n\tau)u_n\|_{X_{\tau}^{s,b}}\lesssim \|u_n\|_{X_{\tau}^{s,b}}, \label{lemma31}\\
			&\big\|\eta\big(\frac{n\tau}{T}\big)u_n\big\|_{X_{\tau}^{s, b^{\prime}}}\lesssim T^{b-b^{\prime}}\|u_n\|_{X^{s,b}_{\tau}},  \,\,\, -\tfrac{1}{2}<b^{\prime}\le b<\tfrac{1}{2},\ T=N\tau\in(0,1],\label{lemma32}\\
			&\|\eta(n\tau){\rm e}^{in\tau\la \p_x^2\ra}f\|_{X_{\tau}^{s,b}}\lesssim\|f\|_{H^s}, \label{lemma33}\\
			& \bigg\|\eta(n\tau)\tau\sum\limits_{m=0}^{n}{\rm e}^{i(n-m)\tau\la \p_x^2\ra}u_m\bigg\|_{X_{\tau}^{s,b}}\lesssim\|u_n\|_{X^{s,b-1}_{\tau}}, \quad b>\tfrac{1}{2}.\label{lemma34}
		\end{align}
	\end{lemma}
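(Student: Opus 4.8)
The plan is to establish \eqref{lemma31}--\eqref{lemma34} as the discrete Bourgain-space counterparts of the continuous bounds \eqref{lemma12}, \eqref{lemma15}, \eqref{lemma13} and \eqref{lemma14} of Lemma~\ref{contibour}, following exactly the strategy already used for Lemma~\ref{4geyinli}: transfer every estimate to the Schr\"odinger discrete Bourgain space $X^{s,b}_{\tau,\sigma=-k^2}$ of \cite{Ostermann2022splitting,Ostermann2022JEMS} by means of the conjugation identity \eqref{conj} (together with Lemma~\ref{equvi}), and then quote the corresponding estimate proved there. Since $\eta$ is real-valued, $\overline{\eta(n\tau)u_n}=\eta(n\tau)\ov{u}_n$ and $\overline{\eta(n\tau/T)u_n}=\eta(n\tau/T)\ov{u}_n$, so \eqref{lemma31} and \eqref{lemma32} would follow immediately from \eqref{conj} together with, respectively, the boundedness of multiplication by a fixed smooth cutoff on $X^{s,b}_{\tau,\sigma=-k^2}$ and the time-localization estimate carrying the gain $T^{b-b'}$ for $-\tfrac12<b'\le b<\tfrac12$, $T=N\tau\le 1$.

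For \eqref{lemma33} and \eqref{lemma34} I would additionally use that conjugation turns the GB propagator into the one adapted to $X^{s,b}_{\tau,\sigma=-k^2}$: for real $\theta$,
\[
\overline{{\rm e}^{i\theta\la\p_x^2\ra}f}={\rm e}^{-i\theta\la\p_x^2\ra}\ov{f},\qquad
\overline{\tau\sum_{m=0}^{n}{\rm e}^{i(n-m)\tau\la\p_x^2\ra}u_m}=\tau\sum_{m=0}^{n}{\rm e}^{-i(n-m)\tau\la\p_x^2\ra}\ov{u}_m,
\]
which holds because the Fourier symbol $\sqrt{k^4+1}$ of $\la\p_x^2\ra$ is real and even. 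Hence, by \eqref{conj},
\[
\|\eta(n\tau){\rm e}^{in\tau\la\p_x^2\ra}f\|_{X^{s,b}_\tau}\sim\|\eta(n\tau){\rm e}^{-in\tau\la\p_x^2\ra}\ov{f}\|_{X^{s,b}_{\tau,\sigma=-k^2}}\lesssim\|\ov{f}\|_{H^s}=\|f\|_{H^s},
\]
the last step being the homogeneous linear estimate on $X^{s,b}_{\tau,\sigma=-k^2}$; likewise \eqref{lemma34} would reduce to the discrete inhomogeneous (Duhamel) estimate valid for $b>\tfrac12$ in \cite{Ostermann2022JEMS}. All that has to be checked is that the operations appearing in \eqref{lemma31}--\eqref{lemma34}---multiplication by the real cutoff, the linear flow, and the discrete Duhamel sum---are compatible with conjugation, which they are.

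Should a self-contained proof be preferred instead, the analytic core is a discrete Peetre-type inequality $\la d_\tau(\sigma_1)\ra^{b}\lesssim\la d_\tau(\sigma_1-\sigma_2)\ra^{|b|}\la d_\tau(\sigma_2)\ra^{b}$ (using $|g_\tau(x+y)|\le|g_\tau(x)|+|g_\tau(y)|$ exactly as in the proof of Lemma~\ref{equvi}), combined with the fact that the $\tfrac{2\pi}{\tau}$-periodic function $\sigma\mapsto\widetilde{\eta(n\tau)}(\sigma)$ decays faster than any polynomial on the fundamental domain $|\tau\sigma|\le\pi$; this gives \eqref{lemma31} directly, and \eqref{lemma33} because $n\mapsto{\rm e}^{in\tau\la\p_x^2\ra}f$ has discrete space-time Fourier transform supported where $\sigma-\sqrt{k^4+1}\in\tfrac{2\pi}{\tau}\mathbb Z$, a set on which the Bourgain weight is $O(1)$. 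Then \eqref{lemma32} follows by interpolating the trivial $b'=b$ bound against the scaling of $\eta(n\tau/T)$ in the non-resonant range $-\tfrac12<b'\le b<\tfrac12$, and \eqref{lemma34} by rewriting the \emph{partial} Duhamel sum $\tau\sum_{m=0}^{n}$ through the full sum over $m\in\mathbb Z$ (a discrete counterpart of the classical Fourier-truncation argument) and combining the resulting resolvent bound with \eqref{lemma31}. I expect the genuinely delicate points to be \eqref{lemma32} and \eqref{lemma34}: as in the continuous theory the loss must be confined to the ranges $b<\tfrac12$ and $b>\tfrac12$ respectively, and controlling the interaction of the partial sum with the cutoff $\eta(n\tau)$ is precisely where the discrete Bourgain-space machinery of \cite{Ostermann2022JEMS} is indispensable---which is why I would favour the reduction route above.
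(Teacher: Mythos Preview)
Your treatment of \eqref{lemma31} and \eqref{lemma32} coincides with the paper's: use \eqref{conj} to pass to $X^{s,b}_{\tau,\sigma=-k^2}$ and then cite \cite[Lemma~4.1]{Ostermann2022splitting}. That part is fine.

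For \eqref{lemma33} and \eqref{lemma34}, however, your conjugation route has a subtle gap. Conjugation turns ${\rm e}^{in\tau\la\p_x^2\ra}$ into ${\rm e}^{-in\tau\la\p_x^2\ra}$, whose spatial symbol is ${\rm e}^{-in\tau\sqrt{k^4+1}}$, \emph{not} the Schr\"odinger symbol ${\rm e}^{\mp in\tau k^2}$. So after landing in $X^{s,b}_{\tau,\sigma=-k^2}$ you are not looking at the propagator for which \cite[Lemma~3.4]{Ostermann2022JEMS} is stated, and you cannot simply quote ``the homogeneous linear estimate on $X^{s,b}_{\tau,\sigma=-k^2}$'' or the discrete Duhamel bound there. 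What makes things work is precisely that $|\sqrt{k^4+1}-k^2|\le 1$, so the Bourgain weight is still $O(1)$ on the support of the free solution's space-time Fourier transform---but that is exactly the content of Lemma~\ref{equvi}, not something the cited Schr\"odinger estimates give you for free. The paper accordingly does \emph{not} use \eqref{conj} for \eqref{lemma33}--\eqref{lemma34}; instead it invokes Lemma~\ref{equvi} and then reproves the linear and Duhamel estimates by following the proof of \cite[Lemma~3.4]{Ostermann2022JEMS} line by line with $\sqrt{k^4+1}$ in place of $k^2$. Your alternative ``self-contained'' sketch in the third paragraph is essentially this route and would close the gap; your primary reduction argument as written does not.
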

	\begin{proof}
		Employing \eqref{conj} and the corresponding results in \cite[Lemma 4.1]{Ostermann2022splitting},  we arrive at \eqref{lemma31} and\eqref{lemma32} straightforwardly.
		Taking note of Lemma \ref{equvi} and following the proof of \cite[Lemma 3.4]{Ostermann2022JEMS} line by line, we get \eqref{lemma33} and \eqref{lemma34} in a similar way.
	\end{proof}

	Furthermore, we will give an estimate about the relationship between the norms of Bourgain spaces and their discrete versions.
	\begin{lemma}\label{relation}
		For a sequence of functions $\{u_{\tau}(n\tau+\theta,x)\}_{n\in\mathbb{Z}}$ with $\theta\in[0,3\tau]$ and $\tau\in(0,1)$, it holds
		\begin{align*}
			\sup\limits_{\theta\in[0, 3\tau]}\|u_{\tau}(n\tau+\theta,x)\|_{X^{s,\frac{3}{8}}_{\tau}}\lesssim \|u_{\tau}\|_{X^{s,\frac{3}{8}}}+\tau^{\frac{9}{16}}\|u_\tau\|_{X^{s, \frac{15}{16}}},
		\end{align*}
		where $s\ge 0$.
	\end{lemma}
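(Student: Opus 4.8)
The plan is to compare the discrete Bourgain norm of the shifted sampled sequence $\{u_\tau(n\tau+\theta,x)\}_n$ with the continuous Bourgain norm of $u_\tau$ directly on the Fourier side. First I would reduce to a statement about a single frequency $k$: since both norms carry the same spatial weight $\langle k\rangle^s$ and the spatial Fourier coefficients commute with everything in sight, it suffices to estimate, for each fixed $k$, the $L^2_\sigma l^2$-type norm in the time variable. Writing $w(t)=\widehat{u_\tau}(t,k)$ for the $k$-th spatial Fourier coefficient (a scalar function of $t$), the left-hand side involves the discrete-in-time Fourier transform of the sequence $\{w(n\tau+\theta)\}_n$ against the weight $\langle d_\tau(\sigma-g(k))\rangle^{3/8}$ over $\sigma\in[-\pi/\tau,\pi/\tau]$, while the right-hand side involves the continuous Fourier transform of $w$ against $\langle\sigma-g(k)\rangle^b$. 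Here I would use Lemma~\ref{equvi} (equivalently \eqref{conj}) to replace $g(k)=\sqrt{k^4+1}$ by $k^2$ if that is more convenient, though for this lemma it makes no essential difference.

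The core of the argument is a sampling/aliasing estimate. I would express $w(n\tau+\theta)$ via the Fourier inversion formula $w(t)=\frac{1}{2\pi}\int_\mathbb{R}\widehat w(\sigma)e^{i\sigma t}d\sigma$ and then apply the Poisson summation / periodization identity to the sequence in $n$: the discrete transform $\tau\sum_n w(n\tau+\theta)e^{-in\tau\sigma}$ equals $\sum_{\ell\in\mathbb{Z}}\widehat w\big(\sigma+\tfrac{2\pi\ell}{\tau}\big)e^{i(\sigma+2\pi\ell/\tau)\theta}$ (up to normalization). Thus the discrete norm is controlled by $\big\|\langle d_\tau(\sigma-g(k))\rangle^{3/8}\sum_\ell\widehat w(\sigma+2\pi\ell/\tau)e^{i(\sigma+2\pi\ell/\tau)\theta}\big\|_{L^2_\sigma([-\pi/\tau,\pi/\tau])}$. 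The term $\ell=0$ gives back, after undoing the periodization of $d_\tau$ and using $|d_\tau(\sigma)|\sim|\sigma|$ for $|\tau\sigma|\le\pi$, a contribution bounded by $\|u_\tau\|_{X^{s,3/8}}$ (using $b=3/8$, and the supremum over $\theta$ costs nothing since $|e^{i(\cdot)\theta}|=1$). The aliased terms $\ell\ne 0$ are where the loss appears: on the support of integration $|\sigma|\le\pi/\tau$, one has $|\sigma+2\pi\ell/\tau|\gtrsim |\ell|/\tau$, so $\langle\sigma+2\pi\ell/\tau-g(k)\rangle$ is either comparable to $\langle\sigma-g(k)\rangle$ or large ($\gtrsim 1/\tau$) — a high-modulation regime. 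Trading a power of this large modulation for the gap between $b=3/8$ (available on the right in the $\ell=0$ term) and the higher exponent $15/16$, together with summing the resulting $\ell$-series and a Cauchy–Schwarz in $\ell$, produces the claimed factor $\tau^{9/16}$ multiplying $\|u_\tau\|_{X^{s,15/16}}$. The exponent $9/16$ should emerge as (roughly) the modulation gap $15/16-3/8=9/16$ times one, after balancing the $\ell$-summation against the weight loss; I would track the bookkeeping carefully to land exactly on $9/16$ and $15/16$.

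The main obstacle I anticipate is the careful handling of the periodized weight $d_\tau$ versus the genuine modulation $\langle\sigma-g(k)\rangle$ across the different aliasing branches: on the fundamental cell $|\tau\sigma|\le\pi$ one has the clean comparison $|d_\tau(\sigma-g(k))|\sim|\sigma-g(k)|$ only when $|\sigma-g(k)|\lesssim 1/\tau$, and for larger modulations $|d_\tau|$ saturates at size $\sim 1/\tau$, which is precisely what lets the high-modulation norm $X^{s,15/16}$ absorb the aliased contributions with a gain of $\tau$-powers rather than a loss. Making this dichotomy rigorous — splitting into $|\sigma-g(k)|\lesssim 1/\tau$ and $|\sigma-g(k)|\gtrsim 1/\tau$ and summing the geometric-type series in $\ell$ in each case — is the technical heart, but it is a standard discrete-Bourgain-space maneuver once set up; everything else (Parseval, Fubini, the spatial reduction) is routine.
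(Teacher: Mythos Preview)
Your proposal is correct and is precisely the standard sampling/aliasing argument via Poisson summation that underlies this type of lemma; the paper itself gives no details here and simply defers to \cite[Lemma~4.1]{ji2023Lowregularity}, whose proof proceeds exactly along the lines you describe. In particular, your identification of the exponent $9/16$ as the gap $15/16-3/8$ (arising from bounding $\langle d_\tau\rangle^{3/8}\lesssim\tau^{-3/8}$ and trading it against $\langle\sigma'-g(k)\rangle^{15/16}\gtrsim(|\ell-\ell_0|/\tau)^{15/16}$ on the aliased branches, with a Cauchy--Schwarz in $\ell$ summable since $2\cdot 15/16>1$) is exactly right.
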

	\begin{proof}
		By taking note of the definition in \eqref{norm} and applying a similar argument as in \cite{ji2023Lowregularity}, we are led to this assertion. For the details, we refer to the proof of \cite[Lemma 4.1]{ji2023Lowregularity}.
	\end{proof}

	\section{Error estimates for the FLTS scheme}\label{LRLT}
	According to Proposition \ref{wellposednessforpj}, the $L^2$ norm of the difference between $u$ and $u_{\tau}$ is bounded by $\mathcal{O}(\tau^{\frac{s}{2}})$ on compact time intervals. Therefore, it suffices to estimate the error $e_n:=u_{\tau}(t_n)-u^n$.
	
	Firstly, the local error can be expressed as
	\[
	u_{\tau}(t_{n+1})-\varPhi^{\tau}(u_{\tau}(t_n))=-i\la \p_x^2\ra^{-1}e^{i\tau\la \p_x^2\ra}\bigg(\mathcal{E}^1_n+\mathcal{E}^2_n+\p_x^2 \sum\limits_{j=3}^5\mathcal{E}^j_n\bigg),
	\]
	where $\varPhi^{\tau}$ is given by \eqref{numflow} and
	\begin{align*}
		\mathcal{E}^1_n&=\int_0^{\tau}({\rm e}^{-i\theta\la \p_x^2\ra}-1)\Pi_{\tau}\Re u_{\tau}(t_n+\theta)d\theta,\notag\\
		\mathcal{E}^2_n&=\int_0^{\tau}\Pi_{\tau} \Re\big(u_{\tau}(t_n+\theta)-u_{\tau}(t_n)\big)d\theta,\notag\\
		\mathcal{E}^3_n&=\int_0^{\tau}({\rm e}^{-i\theta\la \p_x^2\ra}-1)\Pi_{\tau}[\Pi_{\tau}\Re u_{\tau}(t_n+\theta)]^2d\theta,\notag\\
		\mathcal{E}^4_n&=\int_0^{\tau}\Pi_{\tau}[\Pi_{\tau}\Re u_{\tau}(t_n+\theta)]^2-\Pi_{\tau}\big[\Pi_{\tau}\Re u_{\tau}(t_n+\theta)\Pi_{\tau}\Re u_{\tau}(t_n)\big]d\theta,\notag\\
		\mathcal{E}^5_n&=\int_0^{\tau}\Pi_{\tau}\big[\Pi_{\tau}\Re u_{\tau}(t_n+\theta)\Pi_{\tau}\Re u_{\tau}(t_n)\big]-\Pi_{\tau}[\Pi_{\tau}\Re u_{\tau}(t_n)]^2d\theta.\notag
	\end{align*}
	Below, we will establish local error estimates and ultimately present the global error for \eqref{numflow}.
	\subsection{Local error estimates}
	Applying Duhamel's formula for \eqref{truneq}, we obtain
	\be\label{onesgap}
	\begin{split}
		u_{\tau}(t_{n}+\theta)-u_{\tau}(t_n)=&\ ({\rm e}^{i\theta\la \p_x^2\ra}-1)u_{\tau}(t_n)\\
		&-i\la \p_x^2\ra^{-1}\int_{0}^{\theta}{\rm e}^{i(\theta-\xi)\la \p_x^2\ra}\Pi_{\tau}\Re u_{\tau}(t_n+\xi)d\xi\\
		&-i\la \p_x^2\ra^{-1}\p_x^2\int_{0}^{\theta}{\rm e}^{i(\theta-\xi)\la \p_x^2\ra}\Pi_{\tau}[\Pi_{\tau}\Re u_{\tau}(t_n+\xi)]^2d\xi.
	\end{split}
	\ee
	Thanks to Proposition \ref{wellposednessforpj}, for $s\in(0,2]$, we have $u_{\tau}\in X^{s,\frac{15}{16}}(T)$. According to the definition of localized Bourgain spaces (cf.~\eqref{locali}), we will hereafter denote by $u_{\tau}$ any extension of the solution to \eqref{truneq} from $[0, T]$ to $\mathbb{R}$ which satisfies
	\begin{align}\label{boundedness}
		\|u_{\tau}\|_{X^{s,\frac{15}{16}}}\le 2\|u_{\tau}\|_{X^{s,\frac{15}{16}}(T)}\lesssim 1.
	\end{align}
	By utilizing Lemma \ref{relation}, \eqref{boundedness} and the fact that $\tau$ is bounded, we observe that
	\begin{align}\label{disbound}
		\sup\limits_{\theta\in [0, \tau]}\|u_{\tau}(t_n+\theta)\|_{X_{\tau}^{s, \frac{3}{8}}}\lesssim 1,
	\end{align}
	which allows us to establish the following estimate for the local error.

	\begin{lemma}
		For $s\in(0, 2]$ and $T<T_{s}$, it holds
		\begin{align}\label{localerror}
			\|	u_{\tau}(t_{n+1})-\varPhi^{\tau}(u_{\tau}(t_n))\|_{X^{0,0}_{\tau}}\le C_{T}\tau^{1+\frac{s}{2}}.
		\end{align}
	\end{lemma}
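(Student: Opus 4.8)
The strategy is to bound each of the five terms $\mathcal{E}^j_n$ separately in the discrete Bourgain norm $X^{0,0}_\tau=l^2_\tau L^2$, using the factor $\la\p_x^2\ra^{-1}$ (and $\la\p_x^2\ra^{-1}\p_x^2$, which is bounded since $|k|^2\lesssim\la k^2\ra$) to absorb derivatives, together with the key frequency-localized input bound \eqref{disbound}. First I would observe that $\la\p_x^2\ra^{-1}$ and $\la\p_x^2\ra^{-1}\p_x^2$ are bounded Fourier multipliers on $L^2$, and that $\mathrm{e}^{i\tau\la\p_x^2\ra}$ is an isometry, so it suffices to estimate $\|\mathcal{E}^1_n\|_{X^{0,0}_\tau}+\|\mathcal{E}^2_n\|_{X^{0,0}_\tau}+\sum_{j=3}^5\|\mathcal{E}^j_n\|_{X^{0,0}_\tau}\lesssim\tau^{1+s/2}$.

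\textbf{The linear terms $\mathcal{E}^1_n,\mathcal{E}^2_n$.} For $\mathcal{E}^1_n$, the gain comes from the factor $({\rm e}^{-i\theta\la\p_x^2\ra}-1)\Pi_\tau$: on frequencies $|k|\le\tau^{-1/2}$ one has $|\theta\la k^2\ra|\lesssim\theta\tau^{-1}$, which is only $O(1)$, so the naive bound on $({\rm e}^{-i\theta\la\p_x^2\ra}-1)\Pi_\tau$ is $O(1)$, not a gain. The correct gain is $s$-dependent: writing $|{\rm e}^{-i\theta\la k^2\ra}-1|\lesssim |\theta\la k^2\ra|^{s/2}\lesssim \theta^{s/2}\la k\ra^s$ and using $\|u_\tau(t_n+\theta)\|_{X^{s,3/8}_\tau}\lesssim 1$ from \eqref{disbound} together with \eqref{lemma24} (to pass from $X^{s,3/8}_\tau$ to an $l^4_\tau L^4$ or $l^2_\tau L^2$ bound after the $\la k\ra^s$ is consumed), one gets, after integrating $\int_0^\tau\theta^{s/2}d\theta\sim\tau^{1+s/2}$, the desired $\tau^{1+s/2}$. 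For $\mathcal{E}^2_n$ one substitutes the Duhamel expansion \eqref{onesgap} for $u_\tau(t_n+\theta)-u_\tau(t_n)$; the first piece $({\rm e}^{i\theta\la\p_x^2\ra}-1)u_\tau(t_n)$ is handled exactly as in $\mathcal{E}^1_n$ (the interpolation bound $|{\rm e}^{i\theta\la k^2\ra}-1|\lesssim(\theta\tau^{-1})^{s/2}$ on the support of $\Pi_\tau$ gives $\tau^{-s/2}\theta^{s/2}$, times the $H^0$–$H^s$ loss $\tau^{-s/2}$... here one instead keeps $\la k\ra^s$ and uses the $X^{s,\cdot}_\tau$ bound), while the two Duhamel integrals over $[0,\theta]$ each contribute an extra factor $\theta$, hence after the outer $\int_0^\tau d\theta$ a factor $\tau^2$, which is $\le\tau^{1+s/2}$ for $s\le 2$; the nonlinear Duhamel integral uses \eqref{lemma25} to control $\Pi_\tau[\Pi_\tau\Re u_\tau]^2$ in $X^{0,0}_\tau$.

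\textbf{The nonlinear terms $\mathcal{E}^3_n,\mathcal{E}^4_n,\mathcal{E}^5_n$.} Term $\mathcal{E}^3_n$ is the nonlinear analogue of $\mathcal{E}^1_n$: the same interpolation inequality $|{\rm e}^{-i\theta\la k^2\ra}-1|\lesssim\theta^{s/2}\la k\ra^s$ applied to the (low-frequency) function $\Pi_\tau[\Pi_\tau\Re u_\tau(t_n+\theta)]^2$, combined with the bilinear estimate \eqref{lemma25} and \eqref{disbound}, yields $\tau^{s/2}\int_0^\tau d\theta\cdot\|u_\tau\|^2\sim\tau^{1+s/2}$. Terms $\mathcal{E}^4_n$ and $\mathcal{E}^5_n$ are telescoping differences of products, each of the form $\Pi_\tau[\Pi_\tau\Re a\,\Pi_\tau\Re(b-c)]$ with $b-c=u_\tau(t_n+\theta)-u_\tau(t_n)$; applying \eqref{lemma25} reduces these to $\|u_\tau(t_n+\theta)\|_{X^{s,3/8}_\tau}\cdot\|u_\tau(t_n+\theta)-u_\tau(t_n)\|_{X^{0,3/8}_\tau}$, and then one inserts \eqref{onesgap} again to produce an extra power of $\theta$ (either from ${\rm e}^{i\theta\la\p_x^2\ra}-1$ via the $\tau^{s/2}$-gain, or from the Duhamel integrals), so that $\int_0^\tau\theta^{s/2}d\theta\sim\tau^{1+s/2}$ closes the estimate. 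Throughout, the constant $C_T$ collects $\|u_\tau\|_{X^{s,15/16}(T)}$ and the implicit constants of Lemmas~\ref{4geyinli}–\ref{relation}.

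\textbf{Main obstacle.} The delicate point is the $\mathcal{E}^2_n$ and $\mathcal{E}^4_n,\mathcal{E}^5_n$ estimates, where one must expand $u_\tau(t_n+\theta)-u_\tau(t_n)$ via Duhamel \eqref{onesgap} and verify that, in the discrete Bourgain framework, the oscillatory factor ${\rm e}^{i\theta\la\p_x^2\ra}-1$ genuinely produces the fractional gain $\theta^{s/2}$ (equivalently $\tau^{s/2}$) rather than a full power $\theta$; this is the mechanism that limits the rate to $\tau^{s/2}$ and must be done carefully at the level of the Fourier multiplier, keeping $\la k\ra^s$ paired with the $X^{s,3/8}_\tau$-bound \eqref{disbound} instead of the $X^{0,\cdot}_\tau$-bound. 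The remaining terms are then routine applications of \eqref{lemma21}–\eqref{lemma25}, \eqref{lemma34} and \eqref{disbound}.
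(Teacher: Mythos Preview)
Your overall strategy coincides with the paper's: reduce to the five pieces $\mathcal{E}^j_n$, use the multiplier bound $|{\rm e}^{-i\theta\la k^2\ra}-1|\lesssim\theta^{s/2}\la k\ra^s$ (equivalently the paper's \eqref{etheta-1}) for $\mathcal{E}^1_n,\mathcal{E}^3_n$, and expand $u_\tau(t_n+\theta)-u_\tau(t_n)$ via \eqref{onesgap} for $\mathcal{E}^2_n,\mathcal{E}^4_n,\mathcal{E}^5_n$. The linear pieces and $\mathcal{E}^2_n$ go through exactly as you describe.

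There is, however, a genuine gap in your treatment of $\mathcal{E}^4_n$ (and by symmetry $\mathcal{E}^5_n$) for $s>5/4$. After applying \eqref{lemma25} you correctly need $\|u_\tau(t_n+\theta)-u_\tau(t_n)\|_{X^{0,3/8}_\tau}$, and you then claim that each Duhamel integral in \eqref{onesgap} ``contributes an extra factor $\theta$''. This is true in $X^{0,0}_\tau$ (which is why your $\mathcal{E}^2_n$ argument works), but \emph{not} in $X^{0,3/8}_\tau$: the bilinear estimate \eqref{lemma25} only bounds $\Pi_\tau[\Pi_\tau u_\tau]^2$ in $X^{0,0}_\tau$, so to place the nonlinear Duhamel integrand in $X^{0,3/8}_\tau$ you must invoke \eqref{lemma22} at a cost of $\tau^{-3/8}$. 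The net gain is therefore only $\theta\cdot\tau^{-3/8}\le\tau^{5/8}$, and the resulting bound for this contribution to $\mathcal{E}^4_n$ is $\tau\cdot\tau^{5/8}=\tau^{13/8}$, which satisfies $\tau^{13/8}\le\tau^{1+s/2}$ only when $s\le 5/4$.

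The paper closes this gap by treating the range $s>5/4$ with a separate, more elementary argument: instead of the discrete bilinear estimate, it estimates the trilinear term directly in $l^2_\tau L^2$ via H\"older ($L^2\cdot L^\infty$) and uses the Sobolev embedding $H^s\hookrightarrow L^\infty$ (valid since $s>1/2$) together with the continuous Bourgain bound \eqref{lemma11}. This recovers a clean $\tau^2$ and hence $\tau^{1+s/2}$ for all $s\le 2$. Your outline needs to incorporate this case distinction (or an equivalent device) to cover the full stated range $s\in(0,2]$.
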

	\begin{proof}
		It suffices to show
		\begin{align*}
			\|\mathcal{E}^i_n\|_{X^{0, 0}_{\tau}}\le C_{T}\tau^{1+\frac{s}{2}}, \quad \quad 1\le i\le 5.
		\end{align*}
		To estimate $\mathcal{E}^i$, it is important to note that the operator $\Pi_{\tau}$ projects onto frequencies less than $\tau^{-\frac{1}{2}}$, and that ${\rm e}^{-i\theta\la \p_x^2\ra}-1$ is a Fourier multiplier in space. Therefore, we observe that for any $\{f(t_n)\}_n\in X^{s,b}_\tau$, the following holds
		\begin{align}\label{etheta-1}
			\sup\limits_{\theta\in[0, \tau]} \|({\rm e}^{-i\theta\la \p_x^2\ra}-1)\Pi_{\tau}f(t_n)\|_{X^{0,b}_{\tau}}\lesssim \tau^{\frac{s}{2}}\|f(t_n)\|_{X^{s,b}_{\tau}},\quad b\in\mathbb{R},
		\end{align}
		which together with \eqref{disbound} yields
		\begin{align*}
			\|\mathcal{E}^1_n\|_{X^{0, 0}_{\tau}}&\lesssim \tau \sup\limits_{\theta\in[0, \tau]} \|({\rm e}^{-i\theta\la \p_x^2\ra}-1)\Pi_{\tau}\Re u_{\tau}(t_n+\theta)\|_{X^{0,0}_{\tau}}\notag\\
			&\lesssim \tau^{1+\frac{s}{2}} \sup\limits_{\theta\in[0, \tau]}\|u_{\tau}(t_n+\theta)\|_{X^{s,0}_{\tau}}\le C_{T}\tau^{1+\frac{s}{2}},
		\end{align*}
		for $s\in (0, 2]$.
		Similarly, applying \eqref{etheta-1}, \eqref{lemma25} and \eqref{disbound}, we arrive at
		\begin{align*}
			\|\mathcal{E}^3_n\|_{X^{0, 0}_{\tau}}&\lesssim \tau \sup\limits_{\theta\in[0, \tau]}\|({\rm e}^{-i\theta\la \p_x^2\ra}-1)\Pi_{\tau}[\Pi_{\tau}\Re u_{\tau}(t_n+\theta)]^2\|_{X^{0, 0}_{\tau}}\notag\\
			&\lesssim \tau^{1+\frac{s}{2}}\sup\limits_{\theta\in[0, \tau]}\|\Pi_{\tau}[\Pi_{\tau}u_{\tau}(t_n+\theta)]^2\|_{X^{s, 0}_{\tau}}\notag\\
			&\lesssim \tau^{1+\frac{s}{2}}\sup\limits_{\theta\in[0, \tau]}\|u_{\tau}(t_n+\theta)\|^2_{X^{s, \frac{3}{8}}_{\tau}}\le C_{T}\tau^{1+\frac{s}{2}}.
		\end{align*}
		Plugging \eqref{onesgap} into $\mathcal{E}^4_n$, we can write $\mathcal{E}^4_n=\mathcal{E}^{4,1}_n+\mathcal{E}^{4,2}_n+
		\mathcal{E}^{4,3}_n$, where
		\begin{align*}
			\mathcal{E}^{4,1}_n&=\int_0^{\tau}\Pi_{\tau}\big[\Pi_{\tau}\Re\big(u_{\tau}(t_n+\theta)\big)\Pi_{\tau}\Re\big(({\rm e}^{i\theta\la \p_x^2\ra}-1)u_{\tau}(t_n)\big)\big]d\theta,\\
			\mathcal{E}^{4,2}_n&=\int_0^{\tau}\Pi_{\tau}\Big[\Pi_{\tau}\Re\big(u_{\tau}(t_n+\theta)\big)\\
			&\quad\quad   \ \ \ \ \ \ \ \ \   \ \ \ \times\Pi_{\tau}\Im\Big(\la \p_x^2\ra^{-1}\int_0^\theta{\rm e}^{i(\theta-\xi)\la \p_x^2\ra}\Pi_{\tau}\Re u_{\tau}(t_n+\xi)d\xi\Big)\Big]d\theta,\\
			\mathcal{E}^{4,3}_n&=\int_0^{\tau}\Pi_{\tau}\Big[\Pi_{\tau}\Re\big(u_{\tau}(t_n+\theta)\big)\\
			&\quad \ \  \ \    \  \ \times\Pi_{\tau}{\rm Im}\Big(\la \p_x^2\ra^{-1}\p_x^2\int_0^\theta{\rm e}^{i(\theta-\xi)\la \p_x^2\ra}\Pi_\tau\big[\Pi_\tau \Re u_{\tau}(t_n+\xi)\big]^2d\xi\Big)\Big]d\theta.
		\end{align*}
		Applying \eqref{lemma25}, \eqref{disbound} and \eqref{etheta-1}, one easily gets
		\begin{align*}
			\|\mathcal{E}^{4,1}_n\|_{X_{\tau}^{0, 0}}&\lesssim \tau \sup\limits_{\theta\in[0, \tau]}\left\|\Pi_{\tau}\big[\Pi_{\tau}u_{\tau}(t_n+\theta)\Pi_{\tau}({\rm e}^{i\theta\la \p_x^2\ra}-1)u_{\tau}(t_n)\big]\right\|_{X_{\tau}^{0, 0}}\\
			&\lesssim  \tau^{1+\frac{s}{2}}\sup\limits_{\theta\in[0, \tau]}\|u_{\tau}(t_n+\theta)\|_{X_{\tau}^{0, \frac{3}{8}}}\|u_{\tau}(t_n)\|_{X_{\tau}^{s, \frac{3}{8}}}\le C_{T}\tau^{1+\frac{s}{2}}.
		\end{align*}
		Similarly, utilizing \eqref{lemma25}, \eqref{etheta-1} \eqref{lemma21}, and \eqref{disbound}, we arrive at
		\begin{align*}
			\|\mathcal{E}^{4,2}_n\|_{X_{\tau}^{0, 0}}&\lesssim \tau^{}\sup\limits_{\theta\in[0, \tau]}\Big\|\Pi_{\tau}\Big[\Pi_{\tau}u_{\tau}(t_n+\theta)\\
			&\ \ \  \ \  \ \ \ \ \ \ \ \ \   \ \ \  \ \ \ \ \ \ \  \ \times \Pi_{\tau}\la \p_x^2\ra^{-1}\int_0^\theta{\rm e}^{i(\theta-\xi)\la \p_x^2\ra}u_{\tau}(t_n+\xi)d\xi\Big]\Big\|_{X_{\tau}^{0,0}}\\
			&\lesssim \tau^{2}\sup\limits_{\theta\in[0, \tau]} \Big(\|u_{\tau}(t_n+\theta)\|_{X_{\tau}^{0, \frac{3}{8}}}\sup\limits_{\xi\in [0, \theta]} \|{\rm e}^{i(\theta-\xi)\la \p_x^2\ra}u_{\tau}(t_n+\xi)\|_{X_{\tau}^{0, \frac{3}{8}}}\Big)\\
			&\lesssim \tau^2  \Big(\sup\limits_{\theta\in[0, \tau]} \|u_{\tau}(t_n+\theta)\|_{X_{\tau}^{0, \frac{3}{8}}}\Big)^2\le C_{T}\tau^{2}.
		\end{align*}
		
		Prior to estimating the term $\mathcal{E}^{4,3}_n$, we first estimate the integral that appears within $\mathcal{E}^{4,3}_n$. For $\theta\le\tau$, applying \eqref{lemma22}, \eqref{lemma25} and \eqref{disbound}, we get
		\begin{align*}
			\Big\|\int_0^\theta{\rm e}^{i(\theta-\xi)\la \p_x^2\ra}\Pi_{\tau}[\Pi_{\tau}u_{\tau}(t_n+\xi)]^2d\xi\Big\|_{X_{\tau}^{0, \frac{3}{8}}}
			&\lesssim \theta \sup\limits_{\xi\in [0, \theta]}\Big\|\Pi_{\tau}[\Pi_{\tau}u_{\tau}(t_n+\xi)]^2\Big\|_{X_{\tau}^{0, \frac{3}{8}}}\\
			&\lesssim \frac{\theta}{\tau^{\frac{3}{8}}}\sup\limits_{\xi\in [0, \theta]} \Big\|\Pi_{\tau}[\Pi_{\tau}u_{\tau}(t_n+\xi)]^2\Big\|_{X_{\tau}^{0, 0}}\\
			&\lesssim \theta^{\frac{5}{8}}\sup\limits_{\xi\in [0, \theta]}\|u_{\tau}(t_n+\xi)\|^2_{X_{\tau}^{0, \frac{3}{8}}}\le C_{T}\theta^{\frac{5}{8}}.
		\end{align*}
		Thus, $\mathcal{E}^{4,3}_n$ can be bounded as
		\begin{align*}
			\|\mathcal{E}^{4,3}_n\|_{X_{\tau}^{0, 0}}&\lesssim \tau^{}\sup\limits_{\theta\in[0, \tau]}\Big\|\Pi_{\tau}\Big[\Pi_{\tau}u_{\tau}(t_n+\theta)\\
			&\ \ \  \ \  \ \ \ \ \ \ \ \ \   \ \  \  \times\Pi_{\tau} \la \p_x^2\ra^{-1}\p_x^2\int_0^\theta{\rm e}^{i(\theta-\xi)\la \p_x^2\ra}\big[\Pi_{\tau}u_{\tau}(t_n+\xi)\big]^2d\xi\Big]\Big\|_{X_{\tau}^{0,0}}\notag\\
			&\lesssim \tau^{}\sup\limits_{\theta\in[0, \tau]}\Big(\|u_{\tau}(t_n+\theta)\|_{X_{\tau}^{0, \frac{3}{8}}}  \Big\|\int_0^\theta{\rm e}^{i(\theta-\xi)\la \p_x^2\ra}\Pi_{\tau}(\Pi_{\tau}u_{\tau}(t_n+\xi))^2d\xi\Big\|_{X_{\tau}^{0, \frac{3}{8}}}\Big)\\
			&\lesssim \tau^{\frac{13}{8}}\sup\limits_{\theta\in [0, \tau]}\|u_{\tau}(t_n+\theta)\|^3_{X_{\tau}^{0, \frac{3}{8}}}\\
			&\le C_{T}\tau^{\frac{13}{8}} \le C_{T}\tau^{1+\frac{s}{2}},\quad \mathrm{when}\quad s\le 5/4.
		\end{align*}
		In the case of $s>5/4$, we estimate $\|\mathcal{E}^{4,3}_n\|_{X_{\tau}^{0, 0}}$ in a different way. For $b\in(1/2, 1)$, by using Parseval's identity, H{\" o}lder's inequality, Sobolev embedding, \eqref{lemma11} and \eqref{boundedness}, we obtain
		\begin{align*}
			\|\mathcal{E}^{4,3}_n\|_{X_{\tau}^{0, 0}}&=\|\mathcal{E}^{4,3}_n\|_{l_{\tau}^{2}L^{2}}\\
			&\lesssim \tau^{}\sup\limits_{\theta\in[0, \tau]}\Big(\Big\|\int_0^\theta{\rm e}^{i(\theta-\xi)\la \p_x^2\ra}[\Pi_{\tau}u_{\tau}(t_n+\xi)]^2d\xi\Big\|_{l_{\tau}^{\infty}L^2}
			\|\Pi_{\tau}u_{\tau}(t_n+\theta)\|_{l_{\tau}^{2}L^\infty}\Big)\\
			&\lesssim \tau^{}\sup\limits_{\theta\in[0, \tau]}\Big(\Big\|\int_0^\theta{\rm e}^{i(\theta-\xi)\la \p_x^2\ra}[\Pi_{\tau}u_{\tau}(t_n+\xi)]^2d\xi\Big\|_{l_{\tau}^{\infty}L^2}
			\|\Pi_{\tau}u_{\tau}(t_n+\theta)\|_{l_{\tau}^{2}H^s}\Big)\\
			&\lesssim \tau^{2}\sup\limits_{\theta\in[0, \tau]}\|u_{\tau}(t_n+\theta)\|_{l_{\tau}^{\infty}H^s}^2\sup\limits_{\theta\in[0, \tau]}\|u_{\tau}(t_n+\theta)\|_{l_{\tau}^{2}H^{s}}\\
			&\lesssim \tau^{2}\sup\limits_{\theta\in[0, \tau]}\|u_{\tau}(t_n+\theta)\|_{L^{\infty}H^{s}}^2\sup\limits_{\theta\in[0, \tau]}\|u_{\tau}(t_n+\theta)\|_{l_{\tau}^{2}H^{s}}\\
			&\lesssim \tau^{2}\sup\limits_{\theta\in[0, \tau]}\|u_{\tau}(t_n+\theta)\|_{X_{}^{s, b}}^2\sup\limits_{\theta\in[0, \tau]}\|u_{\tau}(t_n+\theta)\|_{X_{\tau}^{s,0}}\lesssim C_{T}\tau^2.
		\end{align*}

		Combining the estimates of $\mathcal{E}^{4,j}_n$ for $j=1,2,3$, one obtains
		\[
		\|\mathcal{E}^4_n\|_{X^{0, 0}_{\tau}}\le \|\mathcal{E}^{4,1}_n\|_{X^{0, 0}_{\tau}}+\|\mathcal{E}^{4,2}_n\|_{X^{0, 0}_{\tau}}+\|\mathcal{E}^{4,3}_n\|_{X^{0, 0}_{\tau}}\le C_{T}\tau^{1+\frac{s}{2}},\quad s\in (0,2].
		\]
		In a similar way, we obtain that the terms $\mathcal{E}^2_n$ and $\mathcal{E}^5_n$ are bounded by $C_{T}\tau^{1+\frac{s}{2}}$.
		This concludes the proof.
	\end{proof}

	\subsection{Global error estimates}\label{global}
	Now we are in a position to establish the global error estimate.
	\begin{proof}[Proof of Theorem \ref{th1}]
		Taking \eqref{znscheme} and Proposition \ref{wellposednessforpj} into account, we obtain
		\begin{align*}
			\|z(t_n)-z^n\|_{L^2}+\|z_t(t_n)-z_t^n\|_{H^{-2}}&\lesssim \|u(t_n)-u^n\|_{L^2}\\
			&\le \|u(t_n)-u_{\tau}(t_n)\|_{L^2}+\|u_{\tau}(t_n)-u^n\|_{L^2}\\
			&\le C_T\tau^{\frac{s}{2}}+\|e_n\|_{L^2}.
		\end{align*}
		It remains to estimate $\|e_n\|_{L^2}$.
		By definition, the error $e_n$ can be written as follows
		\be\label{globalerror}
		\begin{split}
			e_n &=u_{\tau}(t_n)-u^n=\varPhi^{\tau}(u_{\tau}(t_{n-1}))-		\varPhi^{\tau}(u^{n-1})+u_\tau(t_n)-\varPhi^{\tau}(u_{\tau}(t_{n-1}))\\
			&={\rm e}^{i\tau\la \p_x^2\ra}(u_{\tau}(t_{n-1})-u^{n-1})-i\tau\la \p_x^2\ra^{-1} {\rm e}^{i\tau\la \p_x^2\ra}\Pi_{\tau}\Re\big(u_{\tau}(t_{n-1})-u^{n-1}\big)\\
			&\quad -i\tau\la \p_x^2\ra^{-1}\p_x^2 {\rm e}^{i\tau\la  \p_x^2\ra}\Big[\Pi_{\tau}\big(\Pi_{\tau}\Re u_{\tau}(t_{n-1})\big)^2-\Pi_{\tau}\big(\Pi_{\tau}\Re u^{n-1}\big)^2\Big]\\
			&\quad +u_{\tau}(t_n)-\varPhi^{\tau}(u_{\tau}(t_{n-1}))\\
			&=-i\tau\la \p_x^2\ra^{-1}\sum\limits_{k=0}^{n-1} {\rm e}^{i(n-k)\tau\la \p_x^2\ra}\Pi_{\tau}\Re\big(u_{\tau}(t_k)-u^k\big)\\
			&\quad  -i\tau\la \p_x^2\ra^{-1}\p_x^2\sum\limits_{k=0}^{n-1} {\rm e}^{i(n-k)\tau\la  \p_x^2\ra}\Big[\Pi_{\tau}\big(\Pi_{\tau}\Re u_{\tau}(t_k)\big)^2-\Pi_{\tau}\big(\Pi_{\tau}\Re u^k\big)^2\Big]\\
			&\quad +\sum\limits_{k=0}^{n-1} {\rm e}^{i(n-k-1)\tau\la  \p_x^2\ra}\big(u_{\tau}(t_{k+1})-\varPhi^{\tau}(u_{\tau}(
t_k))\big).
		\end{split}
		\ee
		Suppose $\eta\in C^\infty_c:\mathbb{R}\rightarrow [0,1]$ is a smooth function taking $1$ on the interval $[-1, 1]$ and supported in $[-2, 2]$. In the following discussions, we will refer to the truncated version of \eqref{globalerror} as $e_n$, which reads as follows
		\be\label{newen}
		\begin{split}
			e_n&=-i\tau\la \p_x^2\ra^{-1}\eta(t_{n})\sum\limits_{k=0}^{n-1} {\rm e}^{i(n-k)\tau\la \p_x^2\ra} \eta\left(\frac{t_k}{\widetilde{T}}\right)\Pi_{\tau}\Re\big(u_{\tau}(t_k)-
			u^k\big)\\
			&\quad -i\tau\la \p_x^2\ra^{-1}\p_x^2 \ \eta(t_{n})\sum\limits_{k=0}^{n-1} {\rm e}^{i(n-k)\tau\la  \p_x^2\ra} \eta\left(\frac{t_k}{\widetilde{T}}\right)\Big[\Pi_{\tau}\big(\Pi_{\tau}\Re u_{\tau}(t_k)\big)^2\\
			&\quad\qquad\qquad\qquad\qquad\qquad\qquad\qquad -\Pi_{\tau}\big(\Pi_{\tau}\Re u^k\big)^2\Big]+\mathcal{R}_n(\tau, u_{\tau})\\
			&=\mathcal{S}_n(\tau, u_{\tau}, u^n)+\mathcal{R}_n(\tau, u_{\tau}),
		\end{split}
		\ee
		where $\widetilde{T}<\min(1,T)$ is a positive constant to be determined, and $\mathcal{R}_n(\tau, u_{\tau})$ is given by
		\begin{align*}
			\mathcal{R}_n(\tau, u_{\tau})=\eta(t_{n})\sum\limits_{k=0}^{n-1} {\rm e}^{i(n-k-1)\tau\la  \p_x^2\ra}\eta\left(\frac{t_{k+1}}{\widetilde{T}}\right)\big(u_{\tau}(t_{k+1})-\varPhi^{\tau}(u_{\tau}
(t_{k}))\big).
		\end{align*}
		Note that for $0 \le n \le \left[\frac{\widetilde{T}}{\tau}\right]$, we have $e_n = u_{\tau}(t_n) -u^n$.
		
		For $b\in(1/2, 1)$, applying \eqref{lemma34} and \eqref{lemma32}, we obtain
		\begin{align*}
			\|\mathcal{S}_n(\tau, u_{\tau}, u^n)\|_{X^{0, b}_{\tau}}
			\le&\ C_T\widetilde{T}^{1-b}\big(\|\Pi_{\tau}\Re(u_{\tau}(t_n)-u^n)\|_{X^{0, 0}_{\tau}}\\
			&\ +\big\|\Pi_{\tau}\big[\big(\Pi_{\tau}\Re u_{\tau}(t_n)\big)^2-(\Pi_{\tau}\Re u^n\big)^2\big]\big\|_{X^{0, 0}_{\tau}}\big).
		\end{align*}
		The first term on the right-hand side can be bounded as follows:
		\begin{align*}
			&\|\Pi_{\tau}\Re (u_{\tau}(t_n)-u^n)\|_{X^{0, 0}_{\tau}}\lesssim \|e_n\|_{X^{0, 0}_{\tau}}\lesssim \|e_n\|_{X^{0, \frac{3}{8}}_{\tau}}\lesssim \|e_n\|_{X^{0, b}_{\tau}}.
		\end{align*}
		On the other hand, applying \eqref{lemma25} and \eqref{disbound}, one gets
		\begin{align*}
			\big\|\Pi_{\tau}&\big[\big(\Pi_{\tau}\Re u_{\tau}(t_n)\big)^2-(\Pi_{\tau}\Re u^n\big)^2\big]\big\|_{X^{0, 0}_{\tau}}\\
			&\lesssim \|\Pi_{\tau}[(\Pi_{\tau}\Re u_{\tau}(t_n))(\Pi_{\tau} \Re e_n)]\|_{X^{0, 0}_{\tau}}+\|\Pi_{\tau}[(\Pi_{\tau}\Re e_n)(\Pi_{\tau}\Re u^n)]\|_{X^{0, 0}_{\tau}}\\
			& \lesssim \|u_{\tau}(t_n)\|_{X^{0, \frac{3}{8}}_{\tau}}\|e_n\|_{X^{0, \frac{3}{8}}_{\tau}}+\|e_n\|_{X^{0, \frac{3}{8}}_{\tau}}\|u^n\|_{X^{0, \frac{3}{8}}_{\tau}}\\
			&\lesssim \|u_{\tau}(t_n)\|_{X^{0, \frac{3}{8}}_{\tau}}\|e_n\|_{X^{0, \frac{3}{8}}_{\tau}}+\|e_n\|_{X^{0, \frac{3}{8}}_{\tau}}\big(\|u_{\tau}(t_n)\|_{X^{0, \frac{3}{8}}_{\tau}}+\|e_n\|_{X^{0, \frac{3}{8}}_{\tau}}\big)\\
			&\lesssim \|e_n\|_{X^{0, b}_{\tau}}+\|e_n\|^2_{X^{0, b}_{\tau}},
		\end{align*}
		which yields
		\be\label{sta1}
		\|\mathcal{S}_n(\tau, u_{\tau}, u^n)\|_{X^{0, b}_{\tau}}\le C_T \tilde{T}^{1-b}\big(\|e_n\|_{X^{0, b}_{\tau}}+\|e_n\|^2_{X^{0, b}_{\tau}}\big).\ee
		This together with \eqref{newen}, \eqref{lemma23}, \eqref{lemma34}, \eqref{lemma32} and the local error estimate \eqref{localerror} yields for $b\in (1/2, 1)$
		\begin{align*}
			\|e_n\|_{l_{\tau}^{\infty}L^2}&\lesssim\|e_n\|_{X^{0, b}_{\tau}}\notag\\
			&\le\|\mathcal{R}_n(\tau, u_{\tau})\|_{X^{0, b}_{\tau}}+\|\mathcal{S}_n(\tau, u_{\tau}, u^n)\|_{X^{0, b}_{\tau}}\notag\\
			&\lesssim \tau^{-1}\|u_{\tau}(t_n)-\varPhi^{\tau}(u_{\tau}(t_{n-1}))\|_{X^{0, 0}_{\tau}}+\|\mathcal{S}_n(\tau, u_{\tau}, u^n)\|_{X^{0, b}_{\tau}}\notag\\
			&\le C_T\tau^{\frac{s}{2}}+C_T\widetilde{T}^{1-b}\left[\|e_n\|_{X^{0, b}_{\tau}}+\|e_n\|^2_{X^{0, b}_{\tau}}\right].
		\end{align*}
		By selecting $\widetilde{T}$ small enough, we conclude that
		\begin{align*}
			\|e_n\|_{X^{0, b}_{\tau}}\le C_T\tau^{\frac{s}{2}},
		\end{align*}
		which proves the desired estimate for $0\le n\tau \le \widetilde{T}$. Since the selection of $\widetilde{T}$ depends only on $T$, we can repeat the argument for $\widetilde{T}\le n\tau \le 2\widetilde{T}$, $2\widetilde{T}\le n\tau \le 3\widetilde{T}$, $\ldots$, to obtain the estimate for $0\le n\tau \le T$.
	\end{proof}
	
	\section{Numerical experiments}\label{num}
	In this section, we conduct several numerical tests using the newly proposed FLTS scheme \eqref{numflow} to demonstrate the agreement of our computational results with the theoretical predictions on convergence.The spatial discretization is implemented through the Fourier pseudospectral method, enabling us to efficiently compute each step using the Fast Fourier Transform (FFT) in $\mathcal{O}(M\log M)$ operations, where $M$ represents the number of spatial grid points.  We measure the error at time $t_n$ using the $L^2$ norm $\|z(t_n)-z^n\|_{L^2}+\|\partial_t z(t_n)-z_t^n\|_{H^{-2}}$.
	
	\begin{figure}[h!]
		\center
		\hspace{-27pt}
		\subfigure{
			\begin{minipage}[c]{0.4\linewidth}
				\centering
				\includegraphics[height=5.2cm, width=1.3\linewidth]{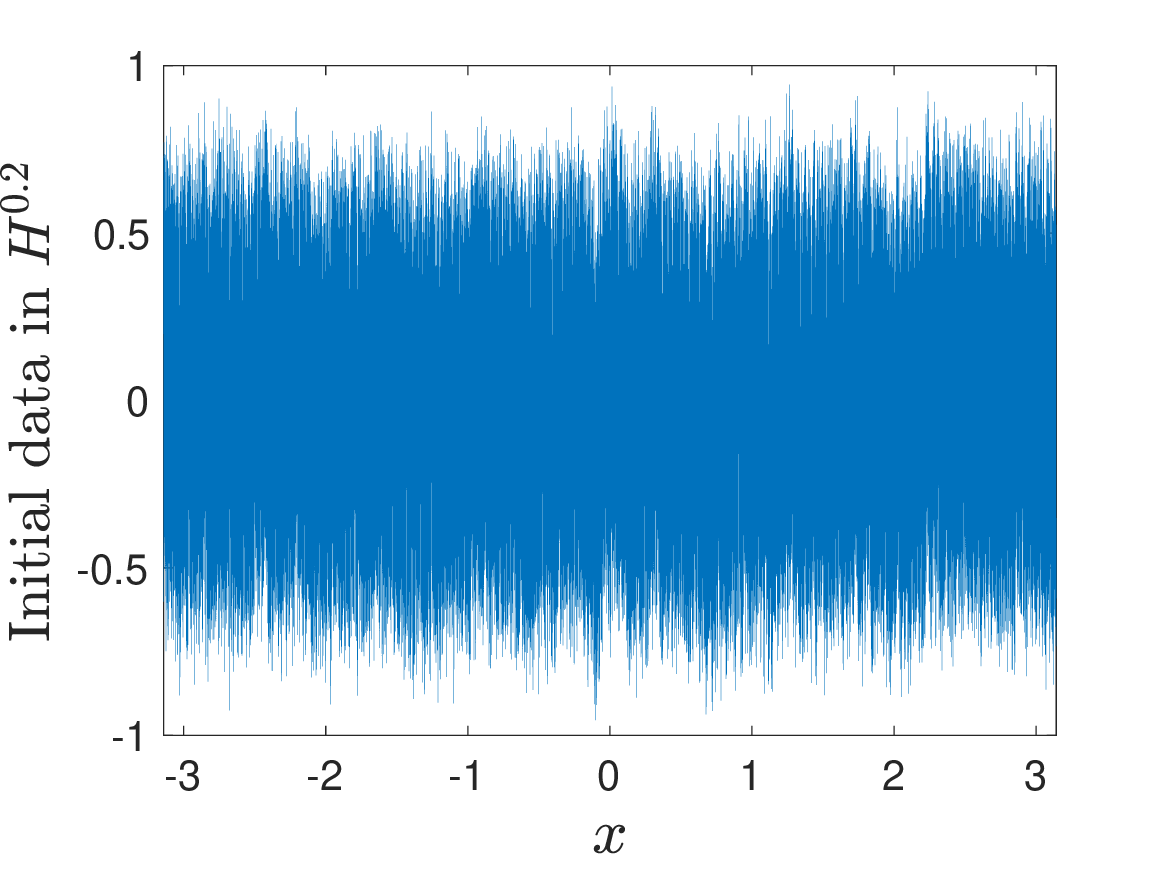}
			\end{minipage}
		}\hspace{26pt}
		\subfigure{
			\begin{minipage}[c]{0.4\linewidth}
				\centering
				\includegraphics[height=5.2cm, width=1.3\linewidth]{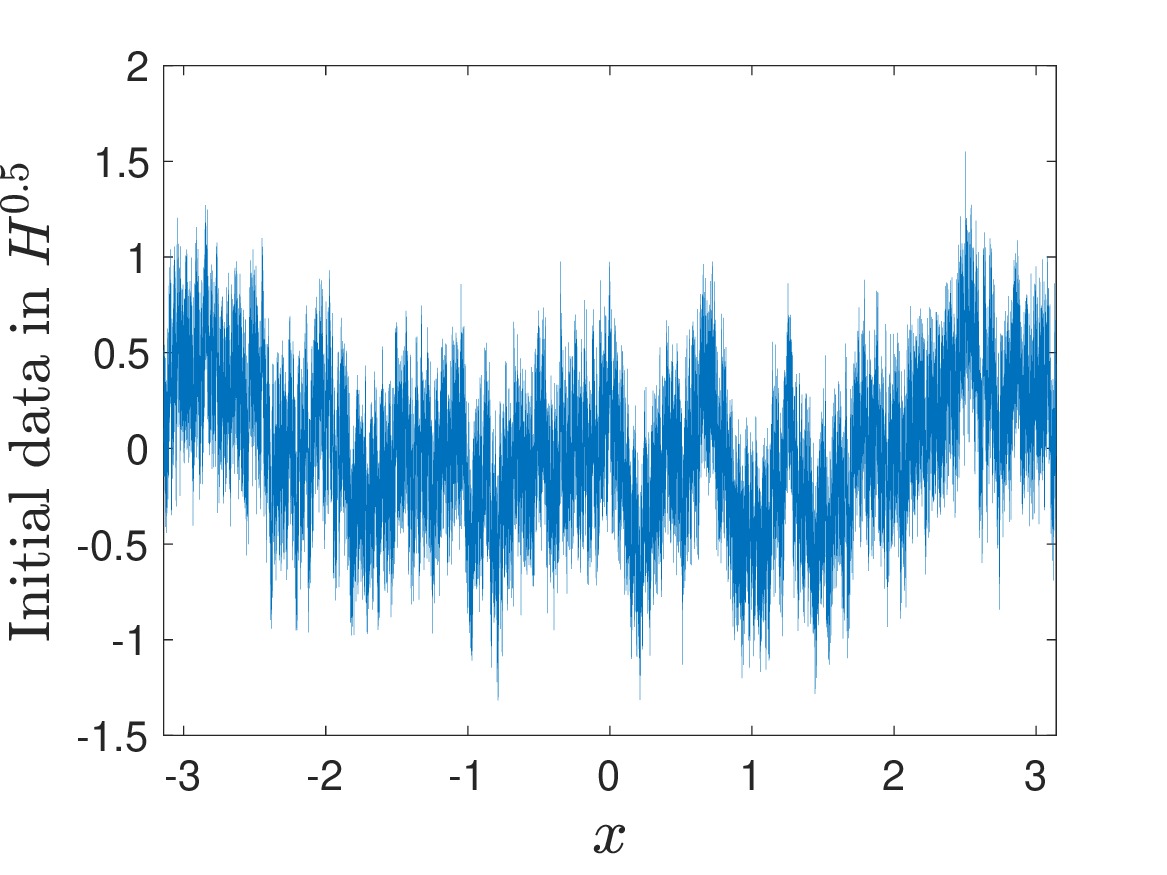}
			\end{minipage}
		}
		\vspace{-4mm}
		\caption{Left: initial value $Z_0$ $\in$ $H^{0.2}$. Right: initial value $Z_0$ $\in$ $H^{0.5}$.} \label{initia}
	\end{figure}
	
	To generate the initial data with the desired regularity, we choose the spatial mesh size $\Delta x=2\pi /M$ with $M=2^{17}$ and the grid points are denoted by $x_j=-\pi+j\Delta x$, $0\leq j<M$. Additionally, we select a random vector uniformly distributed between $0$ and $1$, expressed as rand($1,M$) in Matlab, and denote it as $Z=(z_0, \dots, z_{M-1})$. The operator $|\partial_{x, M}|^{-s}$ acts as an inverse derivative, transforming a function from $L^2(\mathbb{T})$ to the Sobolev space $H^{s}(\mathbb{T})$ by the formula:
	\[
	|\partial_{x, M}|^{-s}f=\sum\limits_{k=-M/2, k\neq 0}^{M/2-1}| k|^{-s} \widehat{f}_k {\rm e}^{ikx},
	\]
	where $s$ is a real number. To normalize the initial data, we begin by computing
	\[
	Z_1=|\partial_{x, M}|^{-s} Z,
	\]
	and then define $Z_0$ as
	\[
	Z_0=\frac{Z_1+c\|Z_1\|_{\infty}}{\|Z_1+c\|Z_1\|_{\infty}\|_{H^{s}}},
	\]
	where $c$ is a randomly chosen number using the command rand($1$). The resulting $Z_0$ is a normalized function in $H^{s}(\mathbb{T})$. Figure \ref{initia} depicts the initial data obtained through the described procedure for  $s=0.2$ and $s=0.5$, respectively.

	Figures \ref{1-1} and \ref{1-2} illustrate the temporal errors of the scheme \eqref{numflow} with initial data in $H^{0.2}$, $H^{1/3}$, $H^{0.5}$ and $H^{0.8}$, respectively. The reference solution is obtained using the numerical solution from the FLTS method with an extremely small time step size $\tau=10^{-7}$. From Figures \ref{1-1} and \ref{1-2}, it can be seen that the results of our numerical experiments agree well with our theoretical analysis (cf. Theorem \ref{th1}), demonstrating that the scheme converges at $O(\tau^{s/2})$ for initial value in $H^s$.
	
	\begin{figure}[htbp]
		\center
		\hspace{-27pt}
		\subfigure{
			\begin{minipage}[c]{0.4\linewidth}
				\centering
				\includegraphics[height=5cm, width=1.3\linewidth]{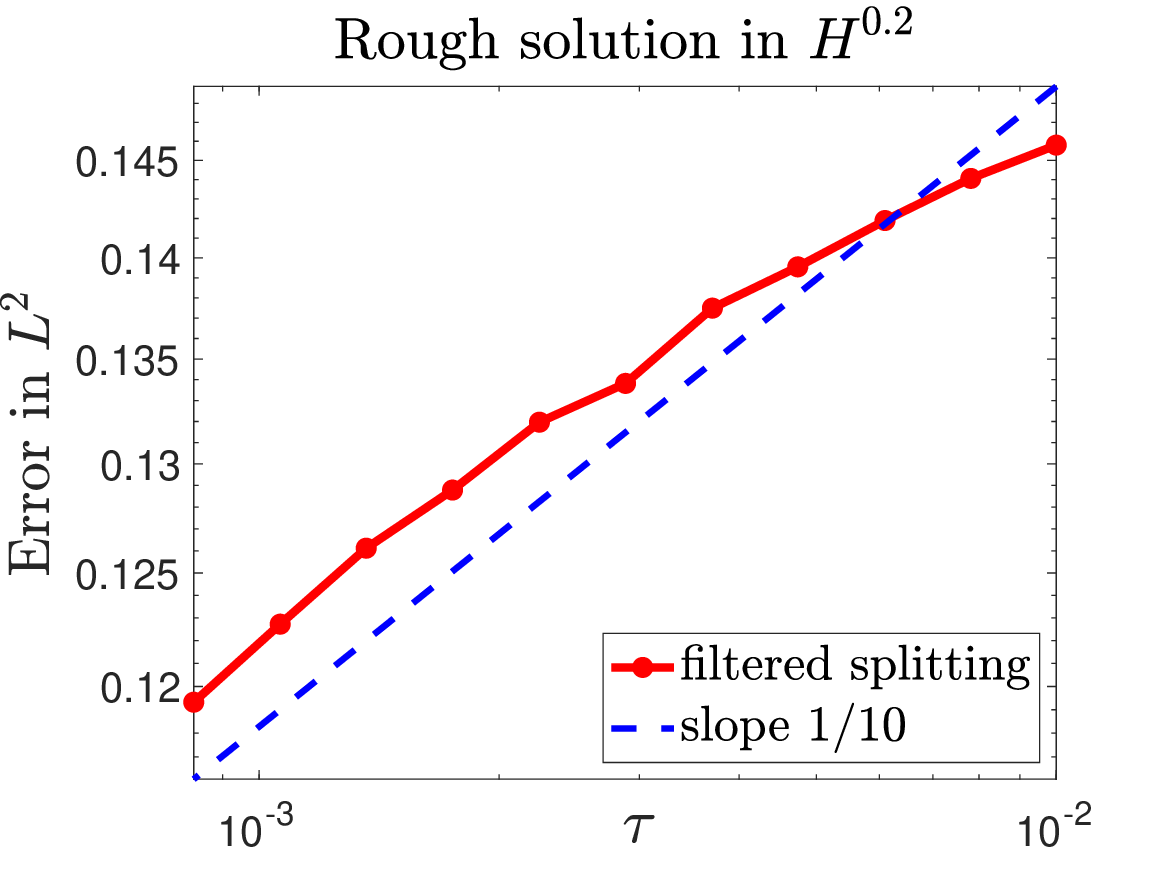}
			\end{minipage}
		}\hspace{26pt}
		\subfigure{
			\begin{minipage}[c]{0.4\linewidth}
				\centering
				\includegraphics[height=5cm, width=1.3\linewidth]{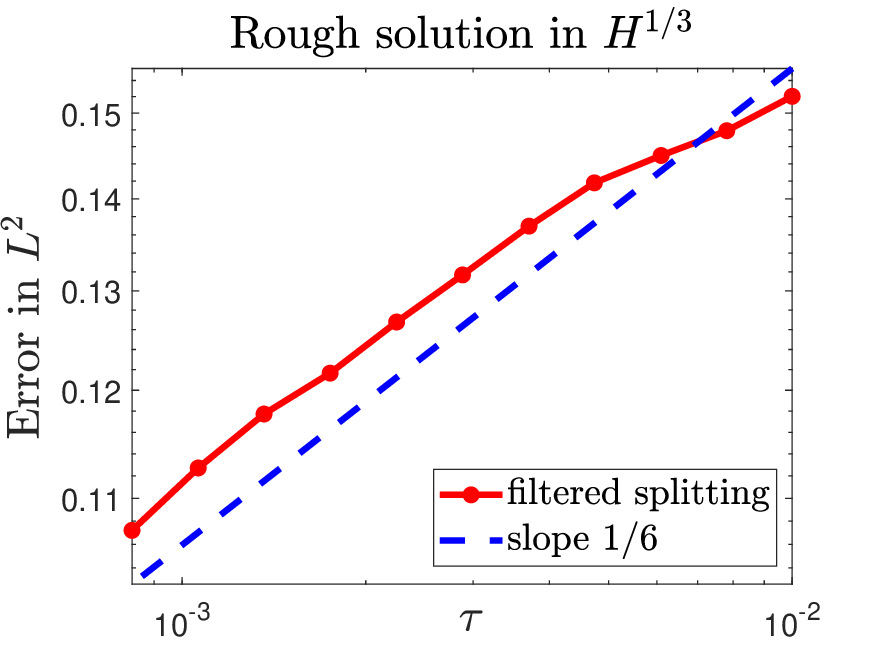}
			\end{minipage}
		}
		\vspace{-4mm}
		\caption{Numerical errors in $L^2$ at the final time $T=0.5$ with rough data in $H^{0.2}$ and $H^{1/3}$, respectively.} \label{1-1}
	\end{figure}
	
	\begin{figure}[htbp]
		\center
		\hspace{-27pt}
		\subfigure{
			\begin{minipage}[c]{0.4\linewidth}
				\centering
				\includegraphics[height=5.2cm, width=1.3\linewidth]{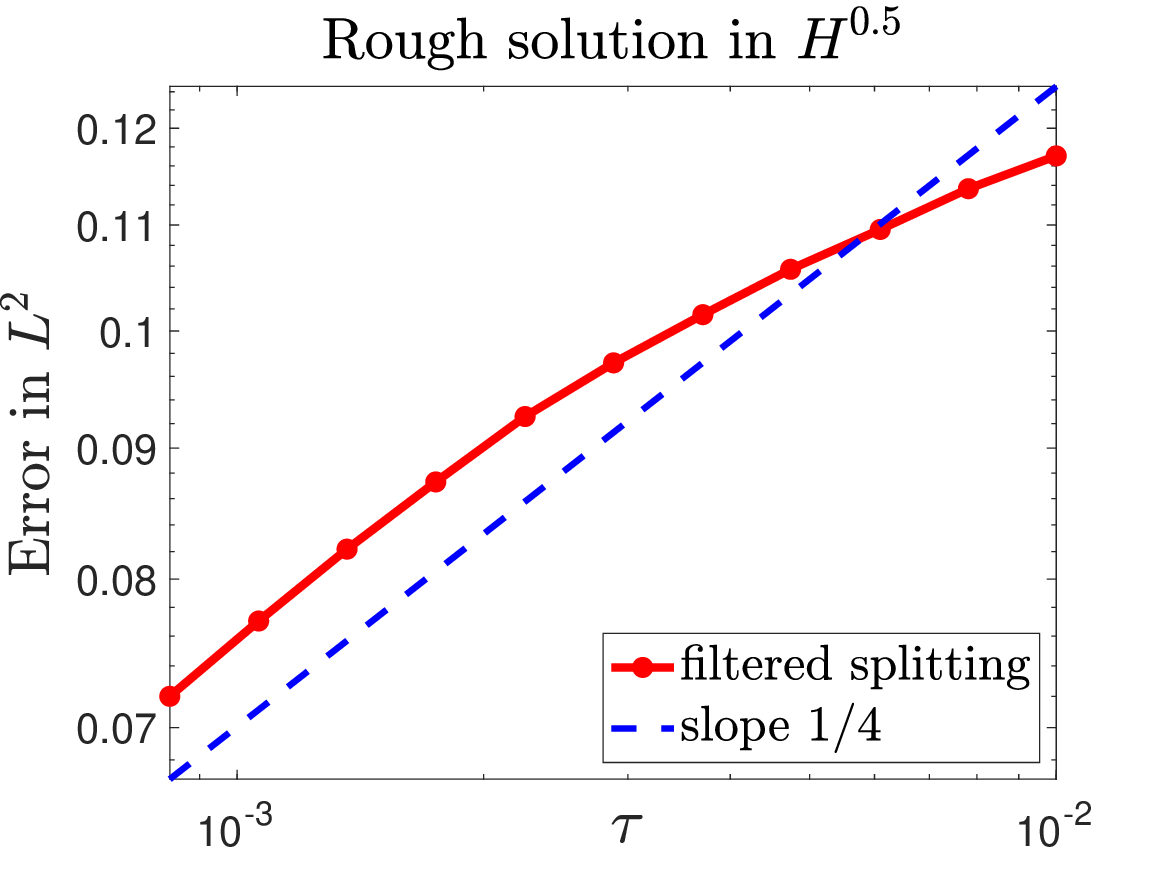}
			\end{minipage}
		}\hspace{26pt}
		\subfigure{
			\begin{minipage}[c]{0.4\linewidth}
				\centering
				\includegraphics[height=5.2cm, width=1.3\linewidth]{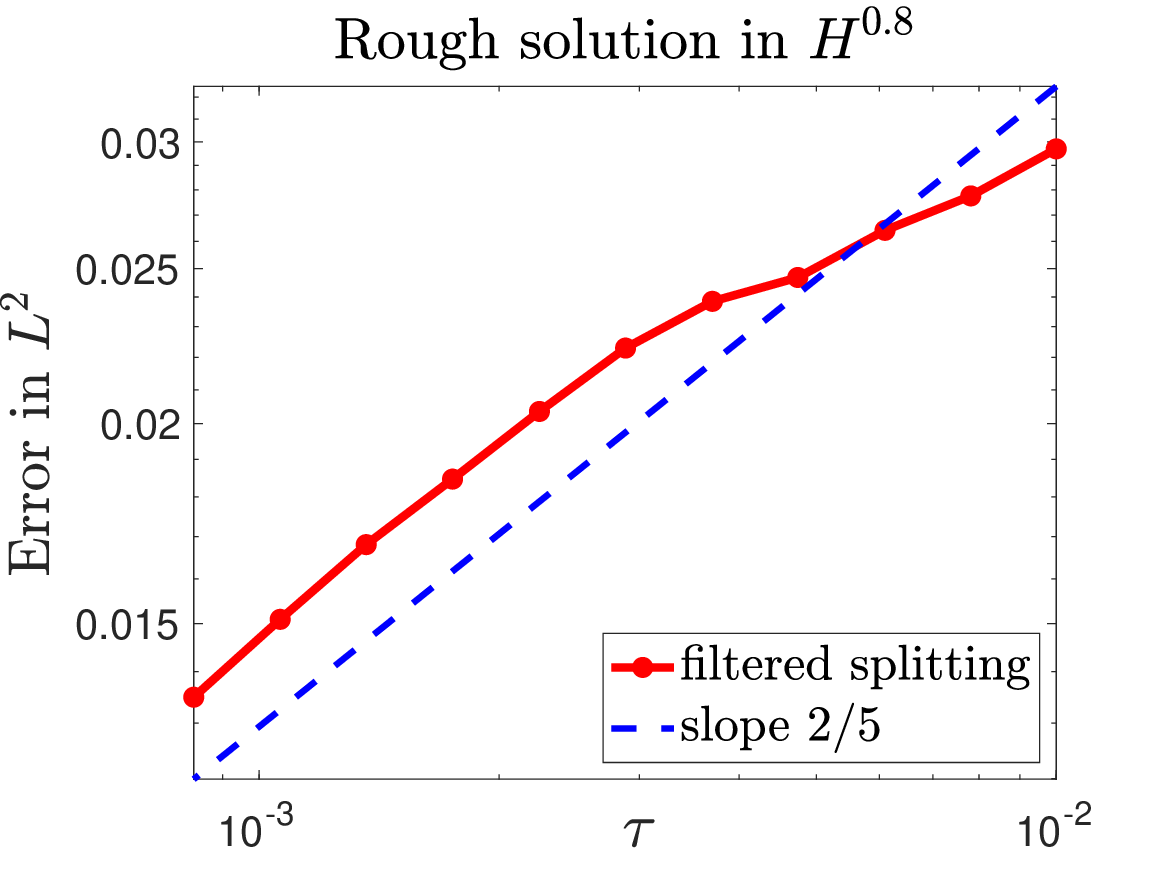}
			\end{minipage}
		}
		\vspace{-4mm}
		\caption{Numerical errors in $L^2$ at the final time $T=0.5$ with rough data in $H^{0.5}$ and $H^{0.8}$, respectively.} \label{1-2}
	\end{figure}

	\bibliographystyle{abbrv}
	
\end{document}